\newtheorem{theorem}{Theorem}[section]
\newtheorem{lemma}[theorem]{Lemma}
\newtheorem{proposition}[theorem]{Proposition}
\theoremstyle{definition}
\newtheorem{conjecture}{Conjecture}[section]
\theoremstyle{remark}
\numberwithin{equation}{section}
\providecommand{\Out}{\mathop{\rm Out}\nolimits}%
\providecommand{\cd}{\mathop{\rm cd}\nolimits}%
\newcommand{\Irr}{\textsf{Irr}}
\newcommand{\Aut}{\textsf{Aut}}
\renewcommand{\leq}{\leqslant}
\newcommand{\Zbb}{\mathbb{Z}}
\providecommand{\mod}[1]{ \ (\mathrm{mod}\ #1)}
\begin{document}

\title[Groups with character degrees of almost simple Mathieu groups]{On groups with the same character degrees as almost simple groups with socle the Mathieu groups}

\author[S.H. Alavi]{Seyed Hassan Alavi}
 \address{
 S.H. Alavi, Department of Mathematics, Faculty of Science,
 Bu-Ali Sina University, Hamedan, Iran}
 \email{alavi.s.hassan@basu.ac.ir alavi.s.hassan@gmail.com (Gmail is preferred)}

\author[A. Daneshkhah]{Ashraf Daneshkhah}
 \thanks{Corresponding author: Ashraf Daneshkhah}
 \address{%
 A. Daneshkhah, Department of Mathematics,
 Faculty of Science,
 Bu-Ali Sina University, Hamedan, Iran}
 \email{adanesh@basu.ac.ir daneshkhah.ashraf@gmail.com (Gmail is preferred)}

\author[A. Jafari]{Ali Jafari}
 \address{%
 A. Jafari, Department of Mathematics,
 Faculty of Science,
 Bu-Ali Sina University, Hamedan, Iran}
 \email{a.jaefary@gmail.com}


\subjclass[2010]{Primary 20C15; Secondary 20D05}
\keywords{Character degrees; Almost simple groups; Sporadic simple groups; Mathieu groups; Huppert's Conjecture.}

\begin{abstract}
    Let $G$ be a finite group and $\cd(G)$ denote the set of complex irreducible character degrees of $G$. In this paper, we prove that if $G$ is a finite group and $H$ is an almost simple group whose socle is Mathieu group such that $\cd(G) =\cd(H)$, then there exists an Abelian subgroup $A$ of $G$ such that $G/A$ is isomorphic to $H$. This study is heading towards the study of an extension of Huppert's conjecture (2000) for almost simple groups.
\end{abstract}

\date{\today}
\maketitle


\section{Introduction}\label{sec:Intro}

Let $G$ be a finite group, and let $\Irr(G)$ be the set of all complex irreducible character degrees of $G$. Denote by $\cd(G)$ the set of character degrees of $G$, that is to say, $\cd(G)=\{\chi(1)\vert \chi \in \Irr(G)\}$.
It is well-known that the complex  group algebra $\mathbb{C}G$ determines character degrees and their multiplicities.

There is growing interest in a question regarding the structure of $G$ which can be recovered from the character degree set of $G$ with or without multiplicity. It is well-known that the character degree set of $G$ can not use to completely determine the structure of $G$. For example, the non-isomorphic groups $D_8$ and $Q_8$ not only have the same set of character degrees, but also share the same character table. The character degree set cannot be used to distinguish between solvable and nilpotent groups. For example, if $G$ is either $Q_{8}$ or $S_{3}$, then $\cd(G) = \{1, 2\}$. Recently, Navarro \cite{Navarro15:cdsol} showed that the character degree set alone cannot determine the solvability of the group. Indeed, he constructed a finite perfect group $H$ and a finite solvable group $G$ such that $\cd(G) = \cd(H)$. It is also discovered by Navarro and Rizo \cite{Novarro14:cdnil} that there exists a finite perfect group and a finite nilpotent group with the same character degree set. Notice that in both examples, these finite perfect groups are not nonabelian simple. It remains open whether the complex group algebra can determine the solvability of the group or not (see Brauer's Problem 2 \cite{Brauer63}).

However, the situation for simple groups and related groups is rather different. It has been proved recently that all \emph{quasisimple} groups are uniquely determined up to isomorphism by their complex group algebras~\cite{Bessenrodt15:doublecover} in which a finite group $G$ is quasisimple if $G$ is perfect and $G/Z(G)$ is a nonabelian simple group. In the late 1990s, Huppert \cite{Hupp-I} posed a conjecture which asserts that the nonabelian simple groups are essentially characterized by the set of their character degrees.

\begin{conjecture}{(Huppert)} Let $G$ be a finite group and $H$ a finite nonabelian simple group such that the sets of character degrees of $G$ and $H$ are the same. Then $G \cong H \times A$, where $A$ is an abelian group.
\end{conjecture}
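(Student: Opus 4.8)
The plan is to follow the five-step reduction introduced by Huppert, which is the only route presently known to attack the conjecture. Assume $\cd(G)=\cd(H)$ with $H$ nonabelian simple; the target is an abelian normal subgroup $A$ with $G \cong H \times A$. I would descend through the normal structure of $G$: first control its solvable quotients, then isolate a nonabelian chief factor and identify it with $H$, next collapse the central part so that $G'$ is a genuine copy of $H$, and finally split off an abelian direct factor. The key inputs are Clifford theory (a degree of a normal subgroup divides a degree of the whole group), Gallagher's theorem, the It\^o--Michler theorem, and --- unavoidably --- the classification of finite simple groups.

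\emph{Steps 1--2.} First I would prove $G'=G''$: if $G' \neq G''$, then $G/G''$ is a nontrivial solvable group, and applying It\^o--Michler and Gallagher to a chief factor $L/G''$ produces character degrees whose arithmetic (divisibility, together with the prime content $\pi(\cd(H))$) is incompatible with the degree set of a simple group. This step is essentially uniform in $H$. Next, let $G'/M$ be a chief factor of $G$, so that $G'/M \cong S^{k}$ for a nonabelian simple group $S$ and some $k \geq 1$. Since every degree of $G'/M$ divides a degree in $\cd(G/M) \subseteq \cd(H)$, while the extremal degrees and prime divisors of $\cd(H)$ must in turn be realized inside $G/M$, a comparison of the two degree sets forces $k=1$ and $S \cong H$.

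\emph{Steps 3--5.} I would then show $M=1$, so that $G' \cong H$. Realizing $G'$ as a quotient of the full covering group of $H$, the nontrivial central (and, more generally, any residual) parts of $M$ are excluded because their faithful characters, combined through Clifford and Gallagher theory with the degrees of $H$, yield degrees lying outside $\cd(H)$. With $G' \cong H$ and $Z(H)=1$ we get $G' \cap C_G(G') = 1$, and $G/C_G(G')$ is almost simple with socle $\mathrm{Inn}(H) \cong H$; showing that every proper almost simple extension of $H$ has an irreducible degree not belonging to $\cd(H)$ forces $G = G' \times C_G(G')$. Finally, since $\cd(G)=\{ab : a\in\cd(H),\, b\in\cd(C_G(G'))\}$ must equal $\cd(H)$, the factor $C_G(G')$ can admit no nonlinear irreducible character, hence is abelian, and $G \cong H \times A$ with $A = C_G(G')$.

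The main obstacle is Step 2, together with the almost-simple bookkeeping at the end of Step 5. There is no uniform reason that the degree set of $S^{k}$, or of an almost simple group with socle $S$, can coincide with $\cd(H)$ only when $S \cong H$ and $k=1$; this must be verified family by family across all finite simple groups, using explicit knowledge of the character degrees of the simple groups, their Schur covers, and their automorphism groups. This CFSG-dependent case analysis --- most delicate for the groups of Lie type of unbounded rank, where closed formulas for all irreducible degrees are unavailable --- is exactly what obstructs a single uniform argument, and is why the conjecture has so far been confirmed only for individual simple groups or for restricted families, such as the Mathieu groups underlying the present paper.
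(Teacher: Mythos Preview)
There is no proof in the paper to compare against: the statement is displayed as a \emph{conjecture}, not a theorem, and the paper makes no claim to establish it in general. What the paper actually proves is Theorem~\ref{thm:main}, an extension of the conjecture to almost simple groups $H$ with socle a Mathieu group, and it does so by precisely the five-step Huppert strategy you outline. So your description of the method is accurate and matches how the paper proceeds in its special case, but it is not a proof of the conjecture, and you yourself say as much in your final paragraph.

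The honest reading of your proposal is that it is a \emph{program}, not a proof. Steps~1 and~3--5 can be made essentially uniform (and the paper's Lemmas~\ref{lem:m12-1}, \ref{lem:M12-3}--\ref{lem:M12-5} and \ref{lem:m22-1}, \ref{lem:M22-3}--\ref{lem:M22-5} illustrate the template), but Step~2 --- identifying the chief factor $G'/M$ with $H$ --- and the exclusion of proper almost simple overgroups in Step~5 genuinely require case-by-case control of character degrees across all finite simple groups. You correctly flag this as the obstruction; it is not a gap you have overlooked so much as the reason the statement remains a conjecture. If the intent was to prove the displayed statement, the proposal is incomplete by design; if the intent was to explain the strategy the paper adopts for its own theorem, then your summary is faithful to what the paper does.
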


This conjecture is verified for Alternating groups, many of the simple groups of Lie type \cite{Tong12:L4,Wakefield09:L3,Wakefield11:2G2} and for all sporadic simple groups \cite{ADTW-Fi23,ADTW-2013,Hupp-II-VIII,HT}.

In this paper, we initiate an investigation on an extension of Huppert's conjecture for almost simple groups whose socle is the sporadic simple groups. A group $H$ is called \emph{almost simple} if there exists a nonabelian simple group $H_{0}$ such that $H_{0}\leq H\leq \Aut(H_{0})$. Indeed, this paper is devoted to studying finite groups with the same character degrees as almost simple groups $H$ with socle $H_{0}$ being one of the Mathieu groups:

\begin{theorem}\label{thm:main}
Let $G$ be a finite group, and let $H$ be an almost simple group whose socle is one of the Mathieu groups. If $\cd(G)=\cd(H)$, then there exists an abelian group $A$ such that $G/A$ is isomorphic to $H$.
\end{theorem}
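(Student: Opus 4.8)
The plan is to prove Theorem~\ref{thm:main} by reducing it, case by case, to the list of Mathieu groups $M_{11}, M_{12}, M_{22}, M_{23}, M_{24}$ together with the (few) almost simple groups built on them, namely $M_{11}$, $M_{12}$, $M_{12}{:}2$, $M_{22}$, $M_{22}{:}2$, $M_{23}$, $M_{24}$. For each such $H$ one reads off $\cd(H)$ explicitly from the \textsf{Atlas} (or \GAP{} character table library) and then analyses an arbitrary finite group $G$ with $\cd(G)=\cd(H)$. The standard machinery here is the four-step method introduced by Huppert: (1) show $G'=G''$, so that if $K=G'$ and $L/K$ is a chief factor of $G$ then $L/K\cong S^{k}$ for a nonabelian simple group $S$; (2) identify the possible simple groups $S$ by comparing prime divisors and degree patterns of $\cd(H)$ with known character degrees of simple groups, eliminating all but $S=H_{0}$ (the socle) and $k=1$; (3) show that $C_{G}(L/K)=K$, so $G/K$ embeds in $\Aut(S)$ and in fact $L/K$ is forced to be all of $H_{0}$ with $G/C_G(L/K)$ isomorphic to $H$; and (4) show $K=1$ in the classical Huppert setting — but here, because $H$ may be a proper extension of $H_{0}$, we instead show that $K$ is abelian and central-by-the-right-thing so that $G/A\cong H$ for a suitable abelian $A$.

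I would organize the core of the argument around a uniform sequence of lemmas: an \emph{ad hoc} lemma collecting $\cd(H)$ for each of the seven groups; a lemma (using Gallagher's theorem and Clifford theory, e.g. Lemma of It\^o--Michler type and results on character degrees of extensions) controlling how $\cd(H_0)$ sits inside $\cd(H)$ when $|H:H_0|=2$; and then the main case analysis. The first real step, $G'=G''$, follows because if $G'\ne G''$ then $G$ has a normal subgroup $N$ with $G/N$ solvable and nonabelian, forcing (by Taketa-type arguments or direct inspection) a character degree pattern incompatible with $\cd(H)$ — concretely, $\cd(H)$ in each of our cases is not the degree set of any solvable group, which one checks via the fact that all these $\cd(H)$ contain degrees with enough distinct prime divisors to violate solvable bounds. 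The second step is the heart of the proof: using the classification of finite simple groups and the known lists of low-degree characters (and the prime graph / largest-degree constraints), one shows that the only nonabelian simple $S$ whose character degrees are compatible with division relations forced by $\cd(H)$ is $S=H_0$ itself, with multiplicity $k=1$; this is where almost all of the work, and the only genuinely case-dependent computation, lies.

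The step I expect to be the main obstacle is precisely this simple-group identification together with the subsequent bootstrapping $L/K = H_0$ and $G/K\cong H$. The subtlety beyond Huppert's original situation is that $\cd(H)$ for $H=H_0.2$ contains character degrees \emph{not} divisible into the usual pattern — the extension introduces new irreducible degrees and can merge/split $H_0$-degrees — so one cannot simply quote the sporadic-simple-group results of \cite{Hupp-II-VIII,ADTW-2013}. Instead one must argue that $K$ centralizes $L/K$: if not, $L/K\cong S^k$ acts nontrivially and a theorem of Bianchi--Chillag--Lewis--Pacifici type (or a direct character-theoretic count) produces a character degree of $G$ too large or with the wrong prime content. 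Once $C_G(L/K)=K$ is known, $H_0 \le G/K \le \Aut(H_0)$, and matching $\cd(G/K)\subseteq\cd(G)=\cd(H)$ against the degree sets of the (at most two) almost simple overgroups of $H_0$ pins down $G/K\cong H$.

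Finally, for step (4) in its modified form, I would show that $K$ is abelian: since $G/K\cong H$ is already almost simple, if $K>1$ were nonabelian then $\cd(K)$ contributes (via Clifford correspondence, inducing from $K$ to $G$) a degree in $\cd(G)$ that is a proper multiple of a member of $\cd(H)$ and lies outside $\cd(H)$ — a contradiction; and if $K$ is abelian we are done with $A=K$, since then $G/A\cong H$. Thus the overall structure is: \textbf{(i)} explicit $\cd(H)$ tables; \textbf{(ii)} $G'=G''$; \textbf{(iii)} identify the chief factor as $H_0$ (CFSG-dependent, the crux); \textbf{(iv)} $G/K\cong H$ via $C_G(L/K)=K$ and Atlas comparison; \textbf{(v)} $K$ abelian, concluding $G/A\cong H$. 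I anticipate devoting a separate short subsection to each Mathieu socle, since although the method is uniform the elimination of spurious simple groups in step (iii) and the degree-set bookkeeping in step (iv) differ in the details from one $H_0$ to the next.
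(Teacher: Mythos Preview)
Your outline follows Huppert's strategy, as does the paper, but there is a genuine gap. First a notational slip: you write ``$K=G'$ and $L/K$ is a chief factor'', but everything that follows treats $K$ as the subgroup \emph{below} $G'$ with $G'/K\cong S^k$; I will write $M$ for this, matching the paper.

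The gap is that your Steps~(iii)--(v) skip the inertia-group analysis that is the technical core of the proof. The paper's Step~3 shows that every linear $\theta\in\Irr(M)$ is $G'$-invariant; via Lemma~\ref{lem:schur} this forces $|M/M'|$ to divide the order of the Schur multiplier of $H_0$, and a check against the character tables of the covering groups of $H_0$ then gives $M=M'$. Combined with the argument that $M$ is abelian (your Step~(v), the paper's Step~4), this yields $M=1$ and hence $G'\cong H_0$. Only then does one set $A=C_G(G')$: since $A\cap G'\leq Z(G')=1$, the group $A$ embeds in the abelian group $G/G'$, so $A$ is abelian, and $H_0\cong G'A/A\unlhd G/A\leq\Aut(H_0)$ permits the final comparison.

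Your proposed shortcut, ``$C_G(L/K)=K$ so $G/K\hookrightarrow\Aut(S)$'', fails already for $G=H\times B$ with $B$ abelian nontrivial: here $M=1$, $G'=H_0$, and $C_G(G')=B\neq 1$. Thus $G/M$ does not embed in $\Aut(H_0)$, and taking $A=M$ does not give $G/A\cong H$. Showing $M$ abelian is not enough; you need $M=1$, and that requires the maximal-subgroup case analysis (Lemmas~\ref{lem:M12.2}(c) and \ref{lem:M22.2}(c) feeding into Lemmas~\ref{lem:M12-3} and \ref{lem:M22-3}) that you omit entirely. Finally, note that five of your seven cases ($H=H_0$) are already handled in \cite{Hupp-II-VIII}; only $M_{12}{:}2$ and $M_{22}{:}2$ need new arguments.
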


In order to prove Theorem~\ref{thm:main}, we establish the following steps which Huppert introduced in \cite{Hupp-I}. Let $H$ be an almost simple group with socle $H_{0}$, and let $G$ be a group with the same character degrees as $H$. Then we show that
\begin{description}
  \item[Step 1.] $G'=G''$;
  \item[Step 2.] if $G'/M$ is a chief factor of $G$, then $G'/M$ is isomorphic to $H_{0}$;
  \item[Step 3.] if $\theta \in \Irr(M)$ with $\theta(1)=1$, then $I_{G'}(\theta)=G'$ and so $M=M'$;
  \item[Step 4.] $M=1$ and $G'\cong H_{0}$;
  \item[Step 5.] $G/C_{G}(G')$ is isomorphic to $H$.
\end{description}

Note that to prove Step 2, we determine all  finite simple groups whose irreducible character degrees divide some irreducible character degrees of almost simple groups with socle sporadic simple groups, and by Proposition~\ref{prop:simple}, all such simple groups are listed in Table~\ref{tbl:simple}. This result somehow relates to \cite[Theorem 1]{Tong12:CAlg-Alt-Spor} for sporadic simple groups.




\section{Preliminaries}\label{sec:prem}

In this section, we present some useful results to prove Theorem~\ref{thm:main}. We first establish some definitions and notation.

Throughout this paper all groups are finite. Recall that a  group $H$ is said to be an almost simple group with socle $H_{0}$ if $H_{0}\leq H\leq \Aut(H_{0})$, where $H_{0}$ is a nonabelian simple group. For a positive integer $n$, $\pi(n)$ denotes the set of all prime divisors of $n$. If $G$ is a group, we will write $\pi(G)$ instead of $\pi(|G|)$. If $N\unlhd G$ and $\theta\in \textrm{Irr}(N)$, then the inertia group of
$\theta$ in $G$ is denoted by $I_G(\theta)$ and is defined by $I_G(\theta)=\{g\in G\ |\ \theta^g=\theta\}$. If the character $\chi=\sum_{i=1}^k e_i\chi_i$, where each $\chi_i$ is an irreducible character of $G$ and $e_i$ is a nonnegative integer, then those $\chi_i$ with $e_i>0$ are called the \textsl{irreducible constituents} of $\chi$. The set of all irreducible constituents of $\theta^G$ is denoted by $\textrm{\Irr}(G|\theta)$.  All further notation and definitions are standard and could be found in \cite{HuppBook,Isaacs-book}.

\begin{lemma}~\cite[Theorems 19.5 and 21.3]{HuppBook}\label{lem:gal}
Suppose $N\unlhd G$ and $\chi\in {\rm{\Irr}}(G)$.
\begin{enumerate}
  \item[(a)] If $\chi_N=\theta_1+\theta_2+\cdots+\theta_k$ with $\theta_i\in {\rm{\Irr}}(N)$, then $k$ divides $|G/N|$. In particular, if $\chi(1)$ is prime to $|G/N|$, then $\chi_N\in {\rm{\Irr}}(N)$.
  \item[(b)] (Gallagher's Theorem) If $\chi_N\in {\rm{\Irr}}(N)$, then $\chi\psi\in {\rm{\Irr}}(G)$ for all $\psi\in {\rm{\Irr}}(G/N)$.
\end{enumerate}
\end{lemma}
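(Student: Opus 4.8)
The final statement to prove is Lemma~\ref{lem:gal}, which is quoted from Huppert's book as Theorems 19.5 and 21.3, so the task is to reconstruct its proof from standard Clifford theory. The plan is to handle the two parts separately, both resting on the restriction--induction machinery for a normal subgroup $N\unlhd G$.

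\textbf{Part (a).} Suppose $\chi\in\Irr(G)$ and write $\chi_N=\theta_1+\cdots+\theta_k$ with $\theta_i\in\Irr(N)$. By Clifford's theorem the $\theta_i$ are precisely the $G$-conjugates of any one of them, say $\theta=\theta_1$, each occurring with equal multiplicity $e$; since all multiplicities here are $1$ we have $e=1$ and the $\theta_i$ form a single $G$-orbit. The number $k$ of distinct conjugates equals the index $|G:I_G(\theta)|$ of the inertia group, and this divides $|G:N|$ because $N\leq I_G(\theta)\leq G$. This gives the first assertion. For the ``in particular'' clause, I would compute degrees: restricting gives $\chi(1)=\sum_i\theta_i(1)=k\,\theta(1)$ (the conjugate characters share a common degree $\theta(1)$), so $k$ divides $\chi(1)$; combined with $k\mid |G:N|$ and the hypothesis $\gcd(\chi(1),|G:N|)=1$, we force $k=1$, whence $\chi_N=\theta_1\in\Irr(N)$.

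\textbf{Part (b), Gallagher's theorem.} Assume $\chi_N\in\Irr(N)$ and let $\psi\in\Irr(G/N)$, viewed as a character of $G$ trivial on $N$. The product $\chi\psi$ is a character of $G$ of degree $\chi(1)\psi(1)$, and I must show it is irreducible. The clean route is to compute the inner product $\la\chi\psi,\chi\psi\ra_G$. Since $\psi$ is constant on cosets of $N$ and $|\psi(g)|$ depends only on the image of $g$ in $G/N$, one organizes the sum over $G$ as a sum over $N$-cosets; using that $\chi_N$ is irreducible (so $\la\chi_N,\chi_N\ra_N=1$) together with the orthogonality relation $\la\psi,\psi\ra_{G/N}=1$ for the irreducible character $\psi$ of the quotient, the double sum factors and yields $\la\chi\psi,\chi\psi\ra_G=1$. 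As $\chi\psi$ is a genuine (nonzero) character with norm $1$, it is irreducible.

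The main obstacle is Part (b): the factorization of $\la\chi\psi,\chi\psi\ra_G$ requires care because $\chi$ itself need not be trivial on $N$, so one cannot naively split the product into an $N$-part and a $G/N$-part. The key observation that makes it work is that on each coset $Ng$ the value $\psi(ng)=\psi(g)$ is constant while $\chi$ varies, so summing $\chi(ng)\overline{\chi(ng)}$ over $n\in N$ first extracts the factor $\la\chi_N,\chi_N\ra_N=1$ uniformly across the coset, after which the remaining sum over coset representatives is exactly $|G/N|\,\la\psi,\psi\ra_{G/N}$. I would present this coset-decomposition explicitly, since it is where the hypothesis $\chi_N\in\Irr(N)$ is essential; everything else is bookkeeping with Clifford's theorem and character orthogonality.
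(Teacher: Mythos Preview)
The paper does not prove this lemma itself---it merely cites \cite{HuppBook}---so there is nothing in the paper to compare against and your reconstruction must stand on its own. Both parts contain genuine gaps.

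In part~(a), the decomposition $\chi_N=\theta_1+\cdots+\theta_k$ lists constituents \emph{with multiplicity}; nothing in the hypothesis forces the $\theta_i$ to be distinct, so in Clifford's notation $k=et$ where $t=|G:I_G(\theta)|$ and $e=[\chi_N,\theta]$. Your sentence ``since all multiplicities here are $1$ we have $e=1$'' is therefore a misreading. With the correct reading, orbit--stabilizer gives only $t\mid|G:N|$; the further divisibility $e\mid|I_G(\theta):N|$ (and hence $k=et\mid|G:N|$) is precisely the nontrivial content of the result and needs a real argument---character triples, or the algebraic-integer proof behind Isaacs' Corollary~11.29. Without it the ``in particular'' clause fails: from $t=1$ alone one gets only $\chi_N=e\theta$, which is not irreducible unless $e=1$.

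In part~(b), your coset computation needs $\sum_{n\in N}|\chi(ng)|^2=|N|$ for \emph{every} $g\in G$, but invoking $\la\chi_N,\chi_N\ra_N=1$ justifies this only for $g\in N$. For $g\notin N$ the sum runs over the coset $Ng$, not over $N$, and ordinary character orthogonality says nothing about it directly. The identity is in fact true, but it requires work: one route writes $\chi(ng)=\operatorname{tr}\bigl(\rho(n)\rho(g)\bigr)$ for a unitary $\rho$ affording $\chi$ and applies Schur orthogonality to the matrix coefficients of the irreducible $\rho|_N$, using $\rho(g)^*\rho(g)=I$ to finish. Alternatively, the standard proof of Gallagher avoids this computation entirely by expanding $(\chi_N)^G=\chi\cdot(1_N)^G=\sum_{\beta\in\Irr(G/N)}\beta(1)\,\chi\beta$ and comparing $[(\chi_N)^G,(\chi_N)^G]=|G:N|$ with $\sum_\beta\beta(1)^2$; you may find that cleaner than patching the coset argument.
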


\begin{lemma}~\cite[Theorems 19.6 and 21.2]{HuppBook}\label{lem:clif}
Suppose $N\unlhd G$ and $\theta\in {\rm{\Irr}}(N)$. Let $I=I_G(\theta)$.
\begin{enumerate}
  \item[(a)]  If $\theta^I=\sum_{i=1}^k\phi_i$ with $\phi_i\in {\rm{\Irr}}(I)$, then $\phi_i^G\in {\rm{\Irr}}(G)$. In particular, $\phi_i(1)|G:I|\in \cd(G)$.
  \item[(b)] If $\theta$ extends to $\psi\in {\rm{\Irr}}(I)$, then $(\psi\tau )^G\in {\rm{\Irr}}(G)$ for all $\tau\in {\rm{\Irr}}(I/N)$. In particular, $\theta(1)\tau(1)|G:I|\in {\rm{\cd}}(G)$.
  \item[(c)] If $\rho \in {\rm{\Irr}}(I)$ such that $\rho_N=e\theta$, then $\rho=\theta_0\tau_0$, where $\theta_0$ is a character of an irreducible projective representation of $I$ of degree $\theta(1)$ and $\tau_0$ is a character of an irreducible projective representation of $I/N$ of degree $e$.
\end{enumerate}
\end{lemma}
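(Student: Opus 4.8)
The plan is to deduce all three parts from Clifford's theorem, the Clifford correspondence, and the theory of projective representations attached to an invariant character, as in the cited references. The one fact I would isolate at the outset is that, since $\theta$ is $I$-invariant by the very definition of $I=I_G(\theta)$, induction from $I$ to $G$ restricts to a bijection between $\Irr(I|\theta)$ and $\Irr(G|\theta)$; this is the \emph{Clifford correspondence}, and in particular it carries irreducible characters lying over $\theta$ to irreducible characters of $G$.

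For part~(a), each $\phi_i$ is by hypothesis a constituent of $\theta^I$, so by Frobenius reciprocity $\theta$ is a constituent of $(\phi_i)_N$; thus $\phi_i\in\Irr(I|\theta)$. The Clifford correspondence then gives $\phi_i^G\in\Irr(G)$ at once, and since $\phi_i^G(1)=|G:I|\,\phi_i(1)$ we obtain $\phi_i(1)\,|G:I|\in\cd(G)$.

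For part~(b), I would first apply Gallagher's theorem, i.e. Lemma~\ref{lem:gal}(b) with $I$ in place of $G$ and the normal subgroup $N\unlhd I$: since $\theta=\psi_N\in\Irr(N)$, the product $\psi\tau$ lies in $\Irr(I)$ for every $\tau\in\Irr(I/N)$. As $N\leq\ker\tau$, one has $(\psi\tau)_N=\tau(1)\,\psi_N=\tau(1)\,\theta$, so $\psi\tau\in\Irr(I|\theta)$; part~(a) then yields $(\psi\tau)^G\in\Irr(G)$, and using $\psi(1)=\theta(1)$ its degree is $\theta(1)\tau(1)\,|G:I|\in\cd(G)$.

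Part~(c) is where I expect the real work to lie, since it requires passing from ordinary to projective representations. Because $\theta$ is $I$-invariant, one can fix a projective representation $\mathcal{X}$ of $I$ of degree $\theta(1)$ whose restriction to $N$ affords $\theta$, with some factor set $\alpha$ that is constant on cosets of $N$ and hence may be read as a factor set of $I/N$; let $\theta_0$ be its character. The claim is then that every $\rho\in\Irr(I)$ with $\rho_N=e\theta$ factors as $\rho=\theta_0\tau_0$, where $\tau_0$ is the character of an irreducible projective representation of $I/N$ of degree $e=\rho(1)/\theta(1)$ and factor set $\alpha^{-1}$, chosen so that the tensor product $\theta_0\otimes\tau_0$ carries the trivial factor set and is therefore an ordinary irreducible representation affording $\rho$. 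Establishing that this factorization exists, and that $\tau_0$ genuinely descends to a projective representation of $I/N$, is precisely the projective-representation correspondence of \cite[Theorems 19.6 and 21.2]{HuppBook}; rather than reprove this machinery I would invoke it, after checking the degree bookkeeping $\deg\theta_0=\theta(1)$ and $\deg\tau_0=e$ matches the stated conclusion.
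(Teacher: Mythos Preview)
Your sketch is correct and follows the standard Clifford-theoretic route: the Clifford correspondence for part~(a), Gallagher combined with (a) for part~(b), and the projective lifting of an invariant character for part~(c). There is nothing to compare against, however, since the paper does not prove this lemma at all; it is stated with a bare citation to \cite[Theorems~19.6 and~21.2]{HuppBook} and used as a black box throughout. Your proposal is thus more detailed than what the paper provides, and it accurately reconstructs the argument one finds in the cited reference.
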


\begin{lemma}~\cite[Lemma~3]{HT}\label{lem:factsolv} Let $G/N$ be a solvable factor group of
$G$ minimal with respect to being nonabelian. Then two cases can occur.
\begin{enumerate}
  \item[(a)] $G/N$ is an $r$-group for some prime $r$. Hence there exists $\psi\in {\rm{\Irr}}(G/N)$ such that $\psi(1)=r^b>1$. If $\chi\in {\rm{\Irr}}(G)$ and $r\nmid \chi(1)$, then $\chi\tau\in {\rm{\Irr}}(G)$ for all $\tau\in {\rm{\Irr}}(G/N)$.
  \item[(b)]  $G/N$ is a Frobenius group with an elementary abelian Frobenius kernel $F/N$. Then $f=|G:F|\in {\rm{\cd}}(G)$ and $|F/N|=r^a$ for some prime $r$, and $a$ is the smallest integer such that $r^a\equiv 1 \mod f$. If $\psi\in {\rm{\Irr}}(F)$, then either $f\psi(1)\in {\rm{\cd}}(G)$ or $r^a$ divides $\psi(1)^2$. In the latter case, $r$ divides $\psi(1)$.
  \begin{enumerate}
    \item[(1)] If no proper multiple of $f$ is in ${\rm{\cd}}(G)$, then $\chi(1)$ divides $f$ for all $\chi\in {\rm{\Irr}}(G)$ such that $r\nmid \chi(1)$, and if  $\chi\in {\rm{\Irr}}(G)$ such that $\chi(1)\nmid f$, then $r^a\mid \chi(1)^2$.
     \item[(2)] If $\chi\in {\rm{\Irr}}(G)$ such that no proper multiple of $\chi(1)$ is in ${\rm{\cd}}(G)$, then either $f$ divides $\chi(1)$ or $r^a$ divides $\chi(1)^2$. Moreover if $\chi(1)$ is divisible by no nontrivial proper character degree in $G$, then $f=\chi(1)$ or $r^a\mid \chi(1)^2$.
  \end{enumerate}
\end{enumerate}
\end{lemma}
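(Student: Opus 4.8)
The plan is to pass to $\overline{G}:=G/N$ and first pin down its isomorphism type, then read off the arithmetic of $\cd(G)$ by Clifford theory. By minimality every proper quotient of $\overline{G}$ is abelian, so $\overline{G}'$ lies in every nontrivial normal subgroup; hence $\overline{G}'$ is the \emph{unique} minimal normal subgroup of $\overline{G}$, and, $\overline{G}$ being solvable, it is an elementary abelian $r$-group $V:=\overline{G}'$ with $\overline{G}/V$ abelian. I would split according to whether $V\le Z(\overline{G})$. If $V\le Z(\overline{G})$, then $\overline{G}$ has nilpotency class at most $2$, so it is nilpotent, and a nilpotent group with a unique minimal normal subgroup is a $p$-group; thus $\overline{G}$ is a nonabelian $r$-group, which is case (a). The existence of $\psi\in\Irr(G/N)$ with $\psi(1)=r^b>1$ is then clear, and the last assertion of (a) is immediate from Lemma~\ref{lem:gal}: as $|G/N|$ is an $r$-power and $r\nmid\chi(1)$, part (a) gives $\chi_N\in\Irr(N)$, and Gallagher's theorem (part (b)) yields $\chi\tau\in\Irr(G)$ for all $\tau\in\Irr(G/N)$.

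Suppose now $V\not\le Z(\overline{G})$; the structural core is to prove $\overline{G}$ is Frobenius with kernel exactly $V$. Since $\overline{G}/V$ is abelian, the Sylow $r$-subgroup $P$ is normal, so $P=O_r(\overline{G})$; as $Z(P)\trianglelefteq\overline{G}$ is nontrivial it contains $V$, so $P$ centralizes $V$, and because the Hall $r'$-part of $C_{\overline{G}}(V)$ is normal (hence trivial) we get $C_{\overline{G}}(V)=P$. Thus $H:=\overline{G}/P$ acts faithfully and irreducibly on $V$; being a section of $\overline{G}/V$ it is abelian, hence cyclic, and the action is fixed-point-free. From $\overline{G}'=V$ we get $[P,H]\le V$, i.e.\ $H$ is trivial on $P/V$; since $V\le Z(P)$ this makes $H$ fix $P'$ pointwise, so $P'\le C_V(H)=1$ and $P$ is abelian, and then coprime action gives $C_P(H)=1$ (otherwise it would contain $V$) and $P=[P,H]\le V$, i.e.\ $P=V$. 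Hence $\overline{G}=V\rtimes H$ is Frobenius with elementary abelian kernel $F/N=V$ of order $r^a$ and cyclic complement of order $f=|G:F|$; faithfulness and irreducibility of the cyclic module $V$ force $a=\mathrm{ord}_f(r)$, and inducing a nontrivial linear character of the abelian kernel shows $f\in\cd(\overline{G})\subseteq\cd(G)$.

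For the dichotomy on $\psi\in\Irr(F)$ I would combine Clifford theory relative to $F\trianglelefteq G$ with the fixed-point-free action of $H=G/F$ on $\Lambda:=\Irr(F/N)$, an $\mathbb{F}_rH$-module of order $r^a$. Each twist $\psi\lambda$ ($\lambda\in\Lambda$) lies in $\Irr(F)$ of degree $\psi(1)$; if some $\psi\lambda$ has trivial inertia, then $(\psi\lambda)^G\in\Irr(G)$ has degree $f\psi(1)$, giving the first alternative. Otherwise every $\psi\lambda$ is fixed by a nontrivial element of $H$. Here is the key step: writing $\psi^h=\psi\nu_h$ on the subgroup $H_0=\{h:\psi^h\in\{\psi\mu:\mu\in\Lambda\}\}$ (one first checks that the stabilizer $L=\{\lambda:\psi\lambda=\psi\}$ is $H_0$-invariant), the map $h\mapsto\nu_h$ is a $1$-cocycle of $H_0$ into $\Lambda/L$; since $|H_0|$ is prime to $r$ we have $H^1(H_0,\Lambda/L)=0$, so this cocycle is a coboundary and the induced affine action of $H_0$ on $\{\psi\lambda\}\cong\Lambda/L$ is conjugate to its fixed-point-free linear part. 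Consequently all nontrivial elements of $H_0$ share a single fixed point, so the hypothesis that every point is fixed forces $\Lambda/L$ to be a point, i.e.\ $L=\Lambda$. Then $\psi$ vanishes on $F\setminus N$, whence $\langle\psi_N,\psi_N\rangle_N=|F/N|=r^a$, and the Clifford decomposition $\psi_N=e\sum_{i=1}^m\theta_i$ gives $\psi(1)^2=r^a\,m\,\theta(1)^2$, so $r^a\mid\psi(1)^2$ (hence $r\mid\psi(1)$). I expect this coprime-cohomology collapse, which upgrades an orbit-counting inequality to an exact identity, to be the main obstacle.

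Finally, parts (1) and (2) follow by applying the dichotomy to an irreducible constituent $\psi$ of $\chi_F$: as $\psi$ extends to its inertia group (because $G/F$ is cyclic), $\chi(1)=(f/e)\psi(1)$ with $e=|I_G(\psi):F|$, so $\psi(1)\mid\chi(1)\mid f\psi(1)$. For (1), if $r\nmid\chi(1)$ the second alternative is impossible, so $f\psi(1)\in\cd(G)$; as no proper multiple of $f$ lies in $\cd(G)$ this forces $\psi(1)=1$ and $\chi(1)\mid f$, and the same two alternatives give $r^a\mid\chi(1)^2$ whenever $\chi(1)\nmid f$. For (2), if the first alternative holds then $\chi(1)\mid f\psi(1)\in\cd(G)$ and the maximality of $\chi(1)$ forces $f\psi(1)=\chi(1)$, so $f\mid\chi(1)$, while the second alternative gives $r^a\mid\chi(1)^2$; the ``moreover'' clause then follows since $f\ne1$ is a nontrivial degree dividing $\chi(1)$, forcing $f=\chi(1)$.
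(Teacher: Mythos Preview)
The paper does not give its own proof of this lemma: it is quoted verbatim from \cite[Lemma~3]{HT} (and ultimately from Huppert's original argument), with no accompanying demonstration. So there is nothing in the present paper to compare your proposal against.

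That said, your argument is correct and is essentially the classical one. The structural dichotomy for $\overline G=G/N$ via the unique minimal normal subgroup $V=\overline G'$, the reduction to $P=V$ in case~(b) using $V\le Z(P)$, $[P,K]\le V$, and coprime action, and the identification of $a$ as the multiplicative order of $r$ modulo $f$ are all standard. Your treatment of the dichotomy for $\psi\in\Irr(F)$ --- twisting by $\Lambda=\Irr(F/N)$, recognising the $1$-cocycle $h\mapsto\nu_h$ with values in $\Lambda/L$, killing it by $H^1(H_0,\Lambda/L)=0$ (coprimality), and concluding $L=\Lambda$ so that $\psi$ vanishes off $N$ --- is exactly the mechanism behind the result, and your derivation of $r^a\mid\psi(1)^2$ from $\langle\psi_N,\psi_N\rangle=r^a$ is clean. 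The deductions of (1) and (2) from $\chi(1)=(f/e)\psi(1)$ with $e=|I_G(\psi):F|$ are also correct.

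Two very minor points of presentation: the assertion that the Hall $r'$-part of $C_{\overline G}(V)$ is normal deserves one more line (e.g.\ observe $[P,Q]\le V$ and $[V,Q]=1$ force $[P,Q]=1$ by coprime action, whence $Q\trianglelefteq\overline G$ and so $Q=1$); and in the cocycle step you should note explicitly that the linear $H_0$-action on $\Lambda/L$ is fixed-point-free because, by Maschke, $\Lambda/L$ is isomorphic to an $H_0$-submodule of $\Lambda$. Neither is a gap, just a place where a reader might pause.
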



\begin{lemma}\label{lem:exten}
Let $G$ be a finite group.
\begin{enumerate}
  \item[(a)] If $G$ is a nonabelian simple group, then there exists a nontrivial irreducible character $\varphi$ of $G$ that extends to ${\rm{\Aut}}(G)$.
  \item[(b)] If $N$ is a minimal normal subgroup of $G$ so that $N\cong S^k$, where $S$ is a nonabelian simple group, and  $\varphi\in {\rm{\Irr}}(S)$ extends to ${\rm{\Aut}}(S)$, then $\varphi^k\in {\rm{\Irr}}(N)$ extends to $G$.
\end{enumerate}
\end{lemma}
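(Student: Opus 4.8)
The plan is to prove the two parts separately: part (a) will rest on the classification of finite simple groups, handled family by family, and part (b) on an explicit wreath‑product module construction followed by a restriction–inflation argument.

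For part (a), I would run through the list of nonabelian simple groups $G$. If $G$ is of Lie type in characteristic $p$, then the Steinberg character of degree $|G|_p>1$ is invariant under all diagonal, field and graph automorphisms and in fact extends to $\Aut(G)$, so it serves as $\varphi$. If $G=\A_n$ with $n\geq 5$, $n\neq 6$, then $\Aut(G)=\S_n$ and the irreducible constituent of degree $n-1$ of the natural permutation character of $\S_n$ restricts irreducibly to $\A_n$, giving $\varphi$; the group $\A_6$ is treated by a direct check. If $G$ is sporadic, then $|\Out(G)|\leq 2$ and one reads off from the {\sf ATLAS} a nontrivial $\varphi\in\Irr(G)$ whose degree also occurs in $\Irr(\Aut(G))$ with irreducible restriction. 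This establishes (a).

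For part (b), write $\vartheta=\varphi^k=\varphi\otimes\cdots\otimes\varphi\in\Irr(S^k)=\Irr(N)$; this is irreducible, being a product character on a direct product. Since $S$ is nonabelian simple, $\Aut(N)\cong\Aut(S)\wr\S_k$, and conjugation yields an embedding $G/C_G(N)\hookrightarrow\Aut(N)$ under which $NC_G(N)/C_G(N)$ is identified with $\mathrm{Inn}(N)$, sitting inside the base group as $\mathrm{Inn}(S)^k$. The key step is to extend $\vartheta$ to $\Aut(N)$: let $V$ be a $\C\Aut(S)$-module affording an extension of $\varphi$ to $\Aut(S)$ (which exists by hypothesis), and consider $V^{\otimes k}$; the base group $\Aut(S)^k$ acts factorwise and $\S_k$ acts by permuting the tensor factors, and one checks that these two actions satisfy the defining relations of the semidirect product $\Aut(S)^k\rtimes\S_k=\Aut(S)\wr\S_k$, so $V^{\otimes k}$ becomes a module for $\Aut(N)$ of dimension $\varphi(1)^k=\vartheta(1)$ whose restriction to $\mathrm{Inn}(S)^k\cong S^k$ affords $\vartheta$. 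As this module restricts irreducibly to $N$, it is irreducible, hence affords an extension $\widehat{\vartheta}$ of $\vartheta$ to $\Aut(N)$. Finally, restrict $\widehat\vartheta$ along $G/C_G(N)\hookrightarrow\Aut(N)$ and inflate along $G\twoheadrightarrow G/C_G(N)$; since $N\cap C_G(N)=Z(N)=1$, the map $N\hookrightarrow G\twoheadrightarrow G/C_G(N)$ is an isomorphism onto $\mathrm{Inn}(N)$, so the resulting irreducible character of $G$ restricts to $\vartheta$ on $N$. Thus $\vartheta$ extends to $G$, proving (b).

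The only genuinely delicate point in (b) is the compatibility verification that $V^{\otimes k}$ really is an $\Aut(S)\wr\S_k$-module — that is, that permuting tensor factors intertwines correctly with the factorwise action under wreath‑product multiplication — together with the bookkeeping around $C_G(N)$ in the last step. Part (a) is less an obstacle than a bibliographic matter: there is no uniform proof, and one must either cite the places in the literature where each family of simple groups is dealt with or reproduce the short case analysis indicated above.
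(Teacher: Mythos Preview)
Your proposal is correct. The paper's own proof consists solely of citations---part~(a) to Theorems~2--4 of \cite{Bia} and part~(b) to Lemma~5 of \cite{Bia}---so there is no in-paper argument to compare against; what you have written is essentially the content behind those citations (a CFSG case split for~(a), and the tensor-power wreath-product construction together with the $G/C_G(N)\hookrightarrow\Aut(N)$ embedding for~(b)).
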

\begin{proof}
Part (a) follows from~\cite[Theorems~2-4]{Bia} and to prove part (b) see~\cite[Lemma 5]{Bia}.
\end{proof}

\begin{lemma}~\cite[Lemma 6]{Hupp-I}\label{lem:schur}
Suppose that $M\unlhd G'=G''$ and for every $\lambda\in {\rm{\Irr}}(M)$ with $\lambda(1)=1$, $\lambda^g=\lambda$ for all $g\in G'$. Then $M'=[M,G']$ and $|M/M'|$ divides the order of the Schur multiplier of $G'/M$.
\end{lemma}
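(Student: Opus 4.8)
The plan is to prove the two assertions of the statement separately, the first being a formal fact about dual actions and the second resting on the theory of central extensions. For the equality $M'=[M,G']$, note that since $M\le G'$ we always have $M'=[M,M]\le[M,G']$, so only the reverse inclusion needs an argument. The linear characters of $M$ are exactly the characters of the abelianization $M/M'$, and the conjugation action of $G'$ on the set of linear characters of $M$ is the action dual to the conjugation action of $G'$ on the finite abelian group $M/M'$, under the perfect pairing $M/M'\times{\rm Irr}(M/M')\to\mathbb{C}^\times$. The hypothesis says that $G'$ fixes every linear character of $M$, i.e.\ acts trivially on ${\rm Irr}(M/M')$; by non-degeneracy of the pairing this forces $G'$ to act trivially on $M/M'$ itself, that is, $[M,G']\le M'$. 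Combining the two inclusions yields $M'=[M,G']$.

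For the divisibility statement, observe that $[M,G']=M'$ means $[M/M',G'/M']=1$, so $M/M'$ is a central subgroup of $G'/M'$ and
\[
1\longrightarrow M/M'\longrightarrow G'/M'\longrightarrow G'/M\longrightarrow 1
\]
is a central extension. Since $G'$ is perfect by hypothesis ($G'=G''$), so is the quotient $G'/M'$, because $(G'/M')'=G''M'/M'=G'/M'$. Hence $G'/M'$ is a \emph{perfect} central extension of $Q:=G'/M$. Now invoke the universal central extension of the perfect group $Q$: its kernel is by definition the Schur multiplier ${\rm Mult}(Q)$, and every perfect central extension of $Q$ is a homomorphic image of it, compatibly with the projections onto $Q$. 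Applying this to the extension above identifies the kernel $M/M'$ as an epimorphic image of ${\rm Mult}(G'/M)$, and therefore $|M/M'|$ divides $|{\rm Mult}(G'/M)|$, as required.

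The only non-formal input is the fact used at the end: a perfect central extension of a perfect group $Q$ is covered by the universal central extension of $Q$, so its kernel is a quotient of ${\rm Mult}(Q)=H_2(Q,\mathbb{Z})$. I would either cite this from standard references such as \cite{Isaacs-book} or \cite{HuppBook}, or include the short verification: the universal central extension maps to our $E=G'/M'$ over $Q$, the image contains $E'=E$ because $E$ is generated by that image together with the central kernel of $E\to Q$, so the map is surjective, and a snake-lemma argument then shows it restricts to a surjection ${\rm Mult}(Q)\twoheadrightarrow \ker(E\to Q)=M/M'$. I expect this bookkeeping with central extensions to be the only point needing care; the rest of the argument is immediate from the hypothesis and the fact that $G'$ is perfect.
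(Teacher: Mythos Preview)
Your proof is correct. The paper itself does not prove this lemma at all---it simply records it with a citation to \cite[Lemma~6]{Hupp-I}, so there is no in-paper argument to compare against. Your approach (duality on $M/M'$ to get $[M,G']\le M'$, then perfectness of $G'/M'$ and the universal central extension of the perfect group $G'/M$ to bound $|M/M'|$) is the standard one and is exactly how Huppert's original Lemma~6 is established; the short verification you sketch for the covering property is also sound.
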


\begin{lemma}~\cite[Theorem D]{Moreto}\label{lem:sol}
Let $N$ be a normal subgroup of a finite group $G$ and let $\varphi \in \Irr(N)$ be
$G$-invariant. Assume that $\chi(1)/\varphi(1)$ is odd, for all $\chi(1)\in \Irr(G|\varphi)$. Then $G/N$ is solvable.
\end{lemma}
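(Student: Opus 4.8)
The statement is Theorem~D of \cite{Moreto}; the strategy I would use to prove it is the following. The quickest route deduces it from a relative form of the It\^o--Michler theorem. That theorem, applied with the prime $p=2$, says that if $N$ is normal in $G$, $\varphi\in\Irr(N)$ is $G$-invariant, and $2\nmid\chi(1)/\varphi(1)$ for every $\chi\in\Irr(G\mid\varphi)$, then $G/N$ has an abelian normal Sylow $2$-subgroup, say $P/N$. Then $|G/N:P/N|$ is odd, so by the Schur--Zassenhaus theorem $G/N=(P/N)\rtimes(K/N)$ with $|K/N|$ odd; by the Feit--Thompson theorem $K/N$ is solvable, and $P/N$ is abelian, so $G/N$ is solvable, as required. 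Thus Theorem~D is equivalent to the relative It\^o--Michler theorem at $p=2$, modulo the odd-order theorem.

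If instead one wants a self-contained argument, the plan is to induct on $|G:N|$, supposing $G/N$ non-solvable and aiming to produce $\chi\in\Irr(G\mid\varphi)$ with $\chi(1)/\varphi(1)$ even, a contradiction. Since $\varphi$ is $G$-invariant, one first passes to an isomorphic character triple in which $N\leq Z(G)$ and $\varphi$ is a faithful linear character; this changes neither $\Irr(G\mid\varphi)$ nor the ratios $\chi(1)/\varphi(1)$, which now equal $\chi(1)$. Next let $S$ be maximal subject to $N\leq S$ being normal in $G$ with $S/N$ solvable; then $S/N$ is the solvable radical of $G/N$, so $\mathrm{Soc}(G/S)=M/S\cong S_0^{\,k}$ for some non-abelian simple $S_0$, with $C_{G/S}(M/S)=1$. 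If $M<G$, then $(M,N,\varphi)$ satisfies the hypotheses with strictly smaller index, so induction makes $M/N$, hence its quotient $S_0^{\,k}$, solvable, which is impossible; therefore $M=G$ and $G/S\cong S_0^{\,k}$ with $C_{G/S}(G/S)=1$. Finally one analyses $\Irr(G\mid\varphi)$ by Clifford theory relative to $S$: for $\theta\in\Irr(S\mid\varphi)$ with inertia group $T=I_G(\theta)$, every $\chi\in\Irr(G\mid\theta)$ has degree divisible by $[G:T]\,\theta(1)$, the residual factor being, by Lemma~\ref{lem:clif}(c), the degree of an irreducible projective representation of $T/S\leq S_0^{\,k}$; it suffices to find one $\theta$ for which this product is even.

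In either route the Clifford-theoretic reductions are routine, and the real obstacle is a statement that rests on the classification of finite simple groups: for the first route, the relative It\^o--Michler theorem at $p=2$; for the second, the assertion that for every non-abelian simple group $S_0$ and every cohomology class in its Schur multiplier there is an even-degree (projective) irreducible character of $S_0$, and moreover one surviving the outer automorphisms and the permutation of the $k$ simple factors that occur. This last point is a strengthening of Lemma~\ref{lem:exten}, proved by going through the character tables of the quasisimple groups, their automorphism groups, and the wreath products $S_0\wr\mathrm{Sym}(k)$. With that classification input in hand, an even relative degree arises in every case, contradicting the standing hypothesis and completing the induction.
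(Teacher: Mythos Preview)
The paper does not prove this lemma at all: it is quoted as Theorem~D of \cite{Moreto} and used as a black box, just like the other cited results in Section~\ref{sec:prem}. So there is no ``paper's own proof'' to compare your proposal against; what you have written is an independent sketch of how Moret\'o's theorem might be established.

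On the substance of your sketch: Route~1 is logically correct once the relative It\^o--Michler theorem is granted, but be aware that this theorem (in the strong form yielding a normal abelian Sylow $2$-subgroup of $G/N$) is itself a CFSG-dependent statement of essentially the same depth as the lemma, and Moret\'o's argument is one of the places such relative results are proved. So Route~1 is closer to a reformulation than a genuine reduction. Route~2 has the right architecture. The inductive step is sound: the hypothesis passes to any normal intermediate subgroup $M$ with $N\leq M\leq G$, since for $\chi\in\Irr(M\mid\varphi)$ any $\psi\in\Irr(G\mid\chi)$ lies in $\Irr(G\mid\varphi)$ and $\chi(1)\mid\psi(1)$, forcing $\chi(1)/\varphi(1)$ odd; hence $M<G$ would make $M/N$ solvable while $M/S\cong S_0^{\,k}$ is not. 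The endgame you describe---producing an even relative degree when $G/S\cong S_0^{\,k}$ via projective characters of the simple factors, stable under the relevant automorphisms---is indeed where the CFSG case analysis enters, and your description of what must be checked is accurate in outline though the details (covering groups, fusion of projective characters under $\mathrm{Out}(S_0)$, and the wreath action) require real work. That is precisely the content Moret\'o supplies.
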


\section{Degree properties of almost simple groups with socle sporadic }\label{sec:simple}

In this section, we determine all finite simple groups whose irreducible character degrees divide some irreducible character degrees of almost simple groups whose  socles are sporadic simple groups.

\begin{proposition}\label{prop:simple}
Let $S$ be a simple group, and let $H$ be an almost simple group whose socle is a sporadic simple group. Then the character degrees of $S$ divides some character degrees of $H$ if and only if $H$ and $S$ are as in Table~\ref{tbl:simple}.
\end{proposition}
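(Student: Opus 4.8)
The plan is to reduce the statement to a finite verification. Since there are only finitely many sporadic simple groups and each has finitely many almost simple extensions (indeed $\Aut(H_0)/H_0$ is either trivial or of order $2$ for every sporadic $H_0$), the group $H$ ranges over an explicit finite list. For each such $H$ one reads off $\cd(H)$ from the ATLAS (or from \GAP), and the maximal elements of $\cd(H)$ under divisibility are what matter: a simple group $S$ has all its degrees dividing some degree of $H$ precisely when every $d\in\cd(S)$ divides one of these maximal degrees. So the task becomes: for each $H$, determine all simple $S$ with $\cd(S)$ contained in the ``divisor-closed'' set generated by $\cd(H)$.

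First I would bound $|S|$ and $\pi(S)$. If every degree of $S$ divides some degree of $H$, then in particular $\pi(S)\subseteq\pi(H)$ and, more usefully, the largest degree $b(S)$ of $S$ divides some degree of $H$, hence $b(S)\le \max\cd(H)$. Combined with the classical lower bounds for $b(S)$ in terms of $|S|$ — for instance $b(S)^2 < |S|$ fails in general but $|S| \le b(S)^{1+o(1)}$-type estimates, or more concretely the known minimal-degree and largest-degree bounds for simple groups of Lie type and alternating groups (Landazuri--Seitz--Zalesskii for the smallest nontrivial degree, and the fact that $|S|$ is polynomially bounded in $b(S)$) — this leaves only finitely many candidates $S$. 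In practice one argues: alternating groups $A_n$ are killed for $n$ large because $\cd(A_n)$ contains primes (or prime powers $p$ with $p\le n$) and degrees growing too fast; sporadic $S$ are checked directly against each $H$; and groups of Lie type are eliminated using that their character degrees include the Steinberg degree $|S|_p$ (the $p$-part of the order), which is a large prime power and rarely divides any degree of a fixed $H$.

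The concrete step, which is the bulk of the work, is then a case-by-case comparison over the (short) list of candidate pairs $(H,S)$: for each surviving $S$, list $\cd(S)$ explicitly and check divisibility of each degree into $\cd(H)$; record the pairs that survive in Table~\ref{tbl:simple}. Because the divisibility condition is one-directional and quite restrictive, I expect the surviving $S$ to be small: $H_0$ itself, perhaps some subgroups or sections appearing naturally, and a handful of small alternating or Lie-type groups such as $\A_5,\A_6, \PSL_2(q)$ for small $q$, $\PSL_3(q)$, $U_3(q)$, $\Sp_4(2)'$, etc. The converse direction (that each listed pair genuinely satisfies the condition) is the same finite check read the other way and needs no extra ideas.

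The main obstacle is not conceptual but organisational: ensuring the candidate list is genuinely exhaustive. The delicate point is the simple groups of Lie type of unbounded rank or over large fields — one must argue cleanly that for such $S$ the Steinberg character degree (or another large unipotent degree) is a prime power exceeding $\max\cd(H)$, or has a prime divisor outside $\pi(H)$, so that no degree of $H$ can be a multiple of it. I would handle this with the standard order formulas and the Landazuri--Seitz--Zalesskii bounds rather than any new estimate, so the argument is routine but must be written carefully to leave no family unchecked; once that is done, the remaining finitely many groups are dispatched by direct reference to their character tables.
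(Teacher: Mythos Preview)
Your overall plan is sound and would work, but the paper takes a shorter route to the finite candidate list. Both arguments start from the observation that for a nonabelian simple $S$ every prime dividing $|S|$ divides some $\chi(1)\in\cd(S)$ (It\^o--Michler), so $\pi(S)\subseteq\pi(\cd(H))\subseteq\pi(H)$. At this point the paper simply invokes Zavarnitsine's classification of finite simple groups with narrow prime spectrum \cite{Zavar-2008}: for each sporadic $H_0$ (hence each $H$) the set $\pi(H)$ is a fixed small set of primes, and the simple groups $S$ with $\pi(S)\subseteq\pi(H)$ are already tabulated. This immediately hands you an explicit finite list of candidates, after which one checks the divisibility condition degree by degree in the ATLAS or \GAP.

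Your proposal instead bounds $b(S)$ by $\max\cd(H)$ and then appeals to Steinberg-character and Landazuri--Seitz--Zalesskii type estimates to cut down the Lie-type families. This is correct in principle but is exactly the ``delicate point'' you flag: you would have to run through each Lie family and verify that the Steinberg degree (or some other forced degree) is too large or has a bad prime, and the write-up would be longer and more error-prone than a one-line citation. The prime-spectrum classification buys you all of that bookkeeping for free, and since $\pi(H)$ for sporadic $H$ never exceeds about fifteen primes, it applies directly. The final finite verification step --- listing $\cd(S)$ for each surviving $S$ and checking divisibility into $\cd(H)$ --- is identical in both approaches.
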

\begin{proof}
Suppose that $H$ is an almost simple group with socle a sporadic simple group $H_{0}$. Suppose also that $S$ is a simple group whose degrees divide some degrees of $H$. Then the prime divisors of the degrees of $S$ are exactly those primes dividing $|S|$. Therefore, $\pi(S)\subseteq \pi(H)$, and hence by~\cite{Zavar-2008}, we know all such possible simple groups $S$. We only need to check if the degrees of $S$ divide some degrees of $H$ and this could mainly be done by \cite{Atlas,Gap4}. Since our arguments are similar for each group $H$, we only give a detailed proof for the cases where $H$ is $M_{12}:2$ or $M_{22}:2$ which will be frequently used and referred to in Sections~\ref{sec:M12.2} and~\ref{sec:M22.2}.

Suppose first $H=M_{12}:2$. Then $\pi(S)\subseteq \{2,3,5,11\}$, and so by~\cite{Zavar-2008}, $S$ is isomorphic to one of the simple groups $A_5$, $A_6$, $L_2(11)$, $U_5(2)$, $S_4(3)$, $M_{11}$ and $M_{12}$. Note that $U_5(2)$ and $S_4(3)$ have degrees $220$ and $81$, respectively. Therefore, $S$ is isomorphic to $A_5$, $A_6$, $L_2(11)$, $M_{11}$ or $M_{12}$.

Suppose now $H=M_{22}:2$. Then $\pi(S)\subseteq \{2,3,5,7,11\}$, and so by~\cite{Zavar-2008}, $S$ is isomorphic to one of the simple groups in $\mathcal{A}_{1}\cup \mathcal{A}_{2}\cup\mathcal{A}_{3}$, where
\begin{align*}
  \mathcal{A}_{1}:=\{&A_5 , A_6,A_7, L_2(7) , L_2(8), M_{22}\}, \\
  \mathcal{A}_{2}:=\{&A_{8}, L_{3}(4), L_{2}(49), U_3(3), S_4(7), M_{11}\},\\
  \mathcal{A}_{3}:=\{&A_9, A_{10}, A_{11}, A_{12}, L_2(11), U_3(5),U_4(3),U_5(2), U_6(2),\\
                     &S_4(3), S_6(2), O_8^{+}(2), M_{12}, McL, HS, J_2\}.
\end{align*}
If $S\in \mathcal{A}_{2}$, then $S$ has a degree divisible by $25$, $27$, $44$, $49$ or $64$, which is a contradiction. If $S\in \mathcal{A}_{3}$, then $S$ has a degree which is divisible by $12$, which is also a contradiction. Therefore $S\in \mathcal{A}_{1}$ as claimed.
\end{proof}

\begin{longtable}{lp{9.5cm}}
  \caption{Simple groups $S$ whose irreducible character degrees divide some character degrees of almost simple groups $H$ with socle sporadic simple groups.}\label{tbl:simple}\\
    \hline
    \multicolumn{1}{c}{$H$} & \multicolumn{1}{c}{$S$} \\
    \hline
    \endfirsthead
    \multicolumn{2}{c}%
    {\tablename\ \thetable\ -- Continued} \\
    \hline
    \multicolumn{1}{c}{$H$} & \multicolumn{1}{c}{$S$} \\
    \hline
    \endhead
    \hline \multicolumn{2}{r}{Continued}\\
    \endfoot
    \hline
    \endlastfoot
    $M_{11}$ &
    $A_n$ for $n=5,6$, $M_{11}$   \\
    $M_{12}$, $M_{12}:2$ &
    $A_n$ for $n=5,6$,    $L_2(11)$,  $M_{11}$, $M_{12}$ \\
    $M_{22}$, $M_{22}:2$ &
    $A_n$ for $n=5,6,7$, $L_2(q)$ for $q=7,8$, $M_{22}$\\
    $M_{23}$ &
    $A_n$ for $n=5,6,7$, $L_2(q)$ for $q=7,8$, $M_{11}$, $M_{23}$ \\
    $M_{24}$ &
    $A_n$ for $n=5,6,7,8$, $L_2(q)$ for $q=7,8,11,23$, $L_3(4)$, $U_3(3)$, $M_{11}$, $M_{24}$ \\
    $J_1$ &
    $A_5$, $L_2(q)$ for $q=7,11$, $J_1$ \\
    $J_2$ &
    $A_n$ for $n=5,6,7$, $L_2(q)$ for $q=7,8$, $U_3(3)$, $J_2$ \\
    $J_2:2$ & $A_8$, $L_{3}(4)$, and all $S$ in $J_{2}$, \\
    $J_3$, $J_3:2$ &
    $A_n$ for $n=5,6$, $S_4(3)$, $L_2(q)$ for $q=16,17,19$, $J_3$ \\
    $J_4$ &
    $A_n$ for $n=5,6,7,8$, $L_2(q)$ for $q=7,8,11,23,29,31,32,43$, $L_3(4)$, $L_5(2)$, $U_3(3)$, $U_3(11)$, $M_{11}$, $M_{12}$, $M_{22}$, $J_4$ \\
    $HS$, $HS:2$&
    $A_n$ for $n=5,6,7,8$, $L_2(q)$ for $q=7,8,11$, $L_3(4)$, $M_{11}$, $M_{22}$, $HS$ \\
    $McL$, $McL:2$&
    $A_n$ for $n=5,6,7,8$, $L_2(q)$ for $q=7$, $8$, $11$, $L_3(4)$, $U_3(3)$, $U_3(5)$ $S_4(3)$, $M_{11}$, $McL$ \\
    $Suz$, &
    $A_n$ for $n=5,\ldots,10$, $L_2(q)$ for $q=7$, $8$, $11$, $13$, $25$, $27$, $64$,
    $L_3(3)$, $L_3(4)$, $L_3(9)$, $L_4(3)$,
    $U_3(3)$, $U_3(4)$, $U_4(2)$, $U_4(3)$,
    $U_5(2)$,
    $S_6(2)$,
    $^2B_2(8)$, $G_2(3)$,$^2F_4(2)'$,
    $M_{11}$, $M_{12}$, $M_{22}$, $Suz$, $J_2$ \\
    $Suz:2$ &
    $A_{11}$, and all $S$ in $Suz$, \\
    $Co_3$ &
    $A_n$ for $n=5$, $6$, $7$, $8$, $9$, $11$, $L_2(q)$ for $q=7$, $8$, $11$, $23$, $L_3(4)$, $U_3(q)$ for $q=3,5$ , $U_4(3)$, $S_4(3)$, $S_6(2)$, $M_{11}$, $M_{12}$, $M_{22}$, $M_{23}$, $M_{24}$, $Co_3$ \\
    $Co_2$ &
    $A_n$ for $n=5,\ldots,11$, $L_2(q)$ for $q=7,8,11,23$, $L_3(4)$, $U_3(q)$ for $q=3,5$, $U_4(3)$, $U_5(2)$, $S_4(3)$, $S_6(2)$, $M_{11}$, $M_{12}$, $M_{22}$, $M_{23}$, $M_{24}$ $J_2$, $Co_2$ \\
    $Co_1$ &
    $A_n$ for $n=5,\ldots,16$, $L_2(q)$ for $q=7$, $8$, $11$, $13$, $23$, $25$, $27$, $49$, $64$, $L_3(q)$ for $q=3$, $4$, $9$, $L_4(3)$, $U_3(q)$ for $q=3$, $4$, $5$, $U_4(3)$,$U_5(2)$, $U_6(2)$, $S_4(q)$ for $q=3$, $5$, $8$, $S_6(q)$ for $q=2,3$, $^2B_2(8)$, $O_7(3)$, $O^+_8(2)$, $G_2(q)$ for $q=3,4$, $^3D_4(2)$, $^2F_4(2)'$, $M_{11}$, $M_{12}$, $M_{22}$, $M_{23}$, $M_{24}$, $McL$,
    $J_2$, $HS$, $Co_1$, $Co_{3}$ \\
    $He$, $He:2$ &
    $A_n$ for $n=5,6,7,8$,  $L_2(q)$ for $q=7,8,16,17,49$, $ L_3(4)$, $U_3(3)$, $S_4(4)$, $He$ \\
    $Fi_{22}$ &
    $A_n$ for $n=5,\ldots,11$, $A_{13}$, $L_2(q)$ for $q=7$, $8$, $11$, $13$, $25$, $27$, $64$, $L_3(q)$ for $q=3,4,9$, $L_4(3)$, $U_3(q)$ for $q=3$, $4$, $U_{4}(3)$, $U_5(2)$, $U_6(2)$, $S_4(3)$, $S_6(2)$, $^2B_2(8)$, $O^+_8(2)$, $G_2(q)$ for $q=3,4$, $M_{11}$, $M_{12}$, $M_{22}$, $J_2$, $McL$, $Fi_{22}$\\
    $Fi_{22}:2$ & $S_6(3)$, $O_7(3)$, and all $S$ in $Fi_{22}$.\\
    $Fi_{23}$ &
    $A_n$ for $n=5,\ldots,13$, $L_2(q)$ for $q=7$, $8$, $11$, $13$, $16$, $17$, $23$, $25$, $27$, $64$, $L_3(q)$ for $q=3$, $4$, $9$, $16^{2}$, $L_4(q)$ for $q=3$, $4$,  $U_3(q)$ for $q=3$, $4$, $U_4(q)$ for $q=3$, $4$,  $U_5(2)$, $S_4(3)$, $S_4(4)$, $S_6(2)$, $S_6(3)$, $^2B_2(8)$, $O_7(3)$, $O^+_8(2)$, $O^-_8(2)$, $G_2(q)$ for $q=3,4$, $^2F_4(2)'$,
    $M_{11}$, $M_{12}$, $M_{22}$, $M_{23}$, $M_{24}$, $HS$, $J_2$, $Fi_{23}$\\
    $Fi'_{24}$, $Fi'_{24}:2$  &
    $A_n$ for $n=5,\ldots,14$, $L_2(q)$ for $q=7$, $8$, $11$, $13$, $16$, $17$, $23$, $25$, $27$, $29$, $49$, $64$, $L_3(q)$ for $q=3$, $4$, $9$, $16^{2}$, $L_4(q)$ for $q=3$, $4$, $L_{6}(3)$, $U_3(q)$ for $q=3$, $4$, $U_{4}(3)$, $U_5(2)$, $S_4(q)$ for $q=3$, $4$, $8$, $S_6(2)$, $S_6(3)$, $S_8(2)$, $^2B_2(8)$, $O_7(3)$, $O^\pm_8(2)$, $G_2(q)$ for $q=3$, $4$, $^3D_4(2)$, $^2F_4(2)'$, $M_{11}$, $M_{12}$, $M_{22}$, $M_{23}$, $M_{24}$, $He$, $J_2$, $Fi'_{24}$\\
    $Th$ &
    $A_n$ for $n=5,\ldots,10$, $L_2(q)$ for $q=7$, $8$, $13$, $19$, $25$, $27$, $31$, $49$, $64$, $L_3(q)$ for $q=3$, $4$, $5$, $9$, $L_4(3)$, $L_5(2)$, $U_3(q)$ for $q=3$, $4$, $5$, $8$, $S_4(3)$, $S_4(8)$,$S_6(2)$, $^2B_2(8)$, $^{3}D_{4}(2)$, $G_2(q)$ for $q=3$, $4$, $^2F_4(2)'$, $J_2$, $Th$ \\
    $Ru$ &
    $A_n$ for $n=5,\ldots,8$, $L_2(q)$ for $q=7$, $8$, $13$, $25$, $27$, $29$, $64$, $L_3(q)$ for $q=3$, $4$, $U_3(q)$ for $q=3$, $4$, $5$, $J_2$, $Ru$ \\
    $Ly$  &
    $A_n$ for $n=5,\ldots,9$, $A_{11}$, $L_2(q)$ for $q=7$, $8$, $11$, $31$, $32$, $L_3(q)$ for $q=4$, $5$, $U_3(q)$ for $q=3$, $5$, $S_4(3)$, $M_{11}$, $M_{12}$, $M_{22}$, $J_2$, $McL$, $HS$, $Ly$\\
    $HN$,  $HN:2$  &
    $A_n$ for $n=5,\ldots,10$, $L_2(q)$ for $q=7$, $8$, $11$, $19$, $L_3(q)$ for $q=4$. $19$, $L_{4}(7)$, $U_3(q)$ for $q=3$, $5$, $8$, $U_4(3)$, $S_4(3)$,$O^{+}_{8}(2)$, $M_{11}$, $M_{12}$, $M_{22}$, $J_1$, $J_{2}$, $HS$, $HN$ \\
    $O'N$, $O'N:2$ &
    $A_n$ for $n=5,\ldots,8$, $L_2(q)$ for $q=7$, $8$, $11$, $16$, $17$, $19$, $31$, $32$, $L_3(q)$ for $q=4$, $7$, $U_3(q)$ for $q=3$, $8$, $S_{4}(3)$, $S_6(2)$, $M_{11}$, $M_{12}$, $M_{22}$, $O'N$ \\
    $B$ &
    $A_n$ for $n=5,\ldots,28$, $L_2(q)$ for $q=7$, $8$, $11$, $13$, $16$, $17$, $19$, $23$, $27$, $31$, $32$, $47$, $49$, $64$, $125$, $L_3(q)$ for $q=3$, $4$, $5$, $9$, $16$, $25$, $L_4(q)$ for $q=3$, $4$, $5$, $L_5(2)$, $L_{5}(4)$, $L_6(2)$, $L_{6}(4)$, $U_3(q)$ for $q=3,4,5,8$, $U_4(q)$ for $q=2$, $3$, $4$, $5$, $8$, $U_5(2)$, $U_6(2)$, $S_4(q)$ for $q=4,5,7,8$, $G_2(q)$ for $q=3$, $4$, $5$, $O_7(3)$, $O^\pm_8(2)$, $O^+_8(3)$, $O^\pm_{10}(2)$, $O^\pm_{12}(2)$, $^2F_4(2)'$, $^3D_4(2)$, $M_{11}$, $M_{12}$, $M_{22}$, $M_{23}$, $M_{24}$, $J_1$, $J_2$ , $J_3$, $HS$, $McL$, $Suz$, $Fi_{22}$, $Co_3$, $Co_2$, $Th$, $B$ \\
    $M$ &
    $A_n$ for $n=5,\ldots,36$, $L_2(q)$ for $q=7$, $8$, $11$, $13$, $16$, $17$, $19$, $23$, $25$, $27$, $29$, $31$, $32$, $41$, $47$, $49$, $59$, $64$, $71$, $81$, $169$, $1024$, $L_3(q)$ for $q=3$, $4$, $5$, $7$, $9$, $16$, $19$, $25$, $L_4(q)$ for $q=3$, $4$, $5$, $7$, $9$, $L_5(q)$ for $q=2$, $4$, $L_6(q)$, for  $q=2$, $3$, $4$, $U_3(q)$ for $q=3$, $4$, $5$, $8$, $27$, $U_4(q)$ for $q=2$, $3$, $4$, $5$, $8$, $9$, $U_5(q)$ for $q=2$, $4$, $U_6(2)$ for $q=2$, $4$, $S_4(q)$ for $q=4$, $5$, $7$, $8$, $9$, $S_6(q)$ for $q=2$, $3$, $4$, $5$, $S_8(2)$, $S_{10}(2)$, $^2B_2(8)$, $^2B_2(32)$, $G_2(q)$ for $q=3,4,5$, $O_7(3)$, $O_7(5)$, $O_9(3)$,$O^\pm_8(2)$, $O^\pm_8(3)$, $O^\pm_{10}(2)$, $O^\pm_{10}(3)$, $O^\pm_{12}(2)$, $^{2}G_{2}(27)$, $^2F_4(2)'$, $F_4(2)$, $^3D_4(2)$, $M_{11}$, $M_{12}$, $M_{22}$, $M_{23}$, $M_{24}$, $J_1$, $J_2$, $J_3$, $HS$, $McL$, $Suz$, $Fi_{22}$, $Co_3$, $Co_2$, $Th$, $He$, $O'N$, $Ru$, $M$ \\
    \hline
\end{longtable}

\section{Groups with socle $M_{12}$}\label{sec:M12.2}

In this section, we prove Theorem~\ref{thm:main} for almost simple group $H$ whose socle is $H_{0}:=M_{12}$. By \cite{Hupp-II-VIII}, Theorem~\ref{thm:main} is proved when $H=H_{0}=M_{12}$. Therefore, we only need to deal with the case where $H:=M_{12}:2$. For convenience, we mention some properties of $H$ and $H_{0}$ some of which can be obtained in \cite[pp. 31-33]{Atlas}.

\begin{lemma}\label{lem:M12.2}
Let $H_{0}:= M_{12}$ and $H:=M_{12}:2$. Then
\begin{enumerate}
  \item[(a)] The Schur multiplier and the group of outer automorphisms of $H_{0}$ are $\Zbb_{2}$;
  \item[(b)] The degrees of irreducible characters of $H$ are
      \begin{align*}
      1& &
      45&=3^2\cdot5 &
      66&=2\cdot3\cdot11  &
      120&=2^3\cdot3\cdot5\\
      22&=2\cdot 11 &
      54&=2\cdot3^3  &
      99&=3^2\cdot11  &
      144&=2^4\cdot3^2\\
      32&=2^5 &
      55& =5\cdot11 &
      110&=2\cdot 5\cdot 11 &
      176&=2^4\cdot11
      \end{align*}
  \item[(c)] If $K$ is a maximal subgroup of $H_{0}$ whose index in $H_{0}$ divides some degrees $\chi(1)$ of $H$, then one of the following occurs:
      \begin{enumerate}
        \item[(i)] $K\cong M_{11}$ and $\chi(1)/|H_{0}:K|$ divides $2\cdot5$ or $2^2\cdot3$;
        \item[(ii)] $K\cong M_{10}:2$ and  $\chi(1)/|H_{0}:K|=1$;
        \item[(iii)] $K\cong L_2(11)$ and  $\chi(1)/|H_{0}:K|=1$.
      \end{enumerate}
  \item[(d)] If $S$ is a finite nonabelian simple group whose irreducible character degrees divide some degrees of $H$, then $S$ is isomorphic to $A_5$, $A_6$, $L_2(11)$, $M_{11}$ or $M_{12}$.
\end{enumerate}
\end{lemma}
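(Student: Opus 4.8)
The plan is to verify Lemma~\ref{lem:M12.2} essentially by direct computation from the character table and subgroup structure of $M_{12}$ and $M_{12}{:}2$ as recorded in the \textsf{ATLAS} \cite{Atlas}, supplemented by \GAP{} \cite{Gap4} where convenient; the only genuinely substantive part is part~(c), which requires a Clifford-theoretic argument rather than a table lookup.

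Parts~(a) and~(b) are immediate: the Schur multiplier and outer automorphism group of $M_{12}$ are both $\Zbb_2$ by the \textsf{ATLAS}, and the irreducible character degrees of $M_{12}{:}2$ are read off directly (the degrees of $M_{12}$ are $1,11,16,45,54,55,66,99,120,144,176$, and in the extension the two characters of degree $11$ fuse, the two of degree $16$ fuse, and the characters $45,54,55,99,110$ arising from $M_{12}$ and its outer coset give the listed set $\{1,22,32,45,54,55,66,99,110,120,144,176\}$). Part~(d) is simply the specialisation of Proposition~\ref{prop:simple} to $H=M_{12}{:}2$, for which the relevant row of Table~\ref{tbl:simple} already gives the answer $A_5,A_6,L_2(11),M_{11},M_{12}$; so nothing new is needed there.

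For part~(c), I would first list the maximal subgroups of $M_{12}$ from the \textsf{ATLAS}: up to conjugacy they are $M_{11}$ (two classes, index $12$), $A_6{.}2^2\cong M_{10}{:}2$ (index $66$), $L_2(11)$ (index $144$), $3^2{.}2{.}S_4$ (index $220$), $2\times S_5$ (index $396$), $2^{1+4}{:}S_3$ (index $495$), $4^2{:}D_{12}$ (index $495$), and $A_4\times S_3$ (index $1320$). Now intersect these indices with the set of degrees of $H=M_{12}{:}2$ from part~(b): the index must divide some $\chi(1)$, and $\chi(1)\le 176$. Index $12$ divides $144$ (and $120$, $132$? no — the degrees are only those listed, so $12\mid 144$ and $12\mid 120$, $12\mid 132$ is irrelevant), index $66$ divides $66$ and $132$? again only $66$ is a degree, index $144$ divides $144$; all larger indices exceed every degree. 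This isolates exactly $K\cong M_{11}$, $K\cong M_{10}{:}2$, $K\cong L_2(11)$, and the quotients $\chi(1)/|H_0{:}K|$ are bounded by $176/12=14.\overline{6}$, $176/66<3$, $176/144<2$ respectively. To pin down the precise divisibility in case~(i), note $\chi(1)/12\in\{1,\dots,14\}$ and must satisfy $12\cdot(\chi(1)/12)=\chi(1)\in\cd(H)$, forcing $\chi(1)\in\{120,144\}$ so the ratio is $10=2\cdot5$ or $12=2^2\cdot3$; in cases~(ii),(iii) the only multiple of $66$ or $144$ in $\cd(H)$ is itself, giving ratio $1$.

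The mild subtlety — and the step I would be most careful about — is making sure the statement of~(c) is being used the way later sections need it: it is phrased for \emph{maximal} subgroups $K$ of $H_0$ with $|H_0{:}K|$ dividing a degree of $H$, so the content is really the combinatorial fact that only these three index values survive the divisibility filter against the rather sparse degree set $\{1,22,32,45,54,55,66,99,110,120,144,176\}$. I would double-check each maximal subgroup index against this list (and in particular confirm that $220,396,495,1320$ divide none of these degrees, which is clear since they all exceed $176$), and confirm the isomorphism types $A_6{.}2^2\cong M_{10}{:}2$ and the two non-conjugate copies of $M_{11}$ via the \textsf{ATLAS}. No hard argument is involved; the whole lemma is a packaging of \textsf{ATLAS}/\GAP{} data in the form required for the proof of Theorem~\ref{thm:main} in Section~\ref{sec:M12.2}.
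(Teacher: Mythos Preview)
Your proposal is correct and matches the paper's approach exactly: the paper's proof simply cites the \textsf{ATLAS} for (a)--(b), invokes Proposition~\ref{prop:simple} for (d), and declares (c) a ``straightforward calculation,'' which is precisely the maximal-subgroup index check you carry out explicitly. One small correction: your opening sentence calls part~(c) a ``Clifford-theoretic argument,'' but your own (correct) treatment of it is pure divisibility bookkeeping against the degree set---no Clifford theory is involved here (that machinery enters only later, in Lemma~\ref{lem:M12-3}).
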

\begin{proof}
Parts (a) and (b) follows from \cite[pp. 31-33]{Atlas} and part (d) follows from Proposition~\ref{prop:simple} and Table~\ref{tbl:simple}. Part (c) is a straightforward calculation.
\end{proof}

\subsection{Proof of Theorem~\ref{thm:main} for $M_{12}:2$}\label{sec:proof-M12.2}

As noted before, by \cite{Hupp-II-VIII}, we may assume that $H:=M_{12}:2$. We further assume that $G$ is a finite group with $\cd(G)=\cd(M_{12}:2)$. The proof of Theorem~\ref{thm:main} follows from the following lemmas.

\begin{lemma}\label{lem:m12-1}
$G'=G''$.
\end{lemma}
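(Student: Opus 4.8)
The plan is to show $G' = G''$ by the standard Huppert argument: assume for contradiction that $G' \neq G''$, so that $G$ has a nonabelian solvable quotient, and derive a contradiction with the known degree set $\cd(G) = \cd(M_{12}:2) = \{1, 22, 32, 45, 54, 55, 66, 99, 110, 120, 144, 176\}$. First I would pick a normal subgroup $N \unlhd G$ with $G/N$ solvable and minimal with respect to being nonabelian, and apply Lemma~\ref{lem:factsolv}. This splits into two cases: either $G/N$ is an $r$-group, or $G/N$ is a Frobenius group with elementary abelian kernel $F/N$ of order $r^a$ and complement of order $f = |G:F| \in \cd(G)$.

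In the $r$-group case, Lemma~\ref{lem:factsolv}(a) gives $\psi \in \Irr(G/N)$ with $\psi(1) = r^b > 1$, and for every $\chi \in \Irr(G)$ with $r \nmid \chi(1)$ we get $\chi\tau \in \Irr(G)$ for all $\tau \in \Irr(G/N)$; in particular $\chi(1)r^b \in \cd(G)$. The key observation is that the only prime powers in $\cd(G)$ are $22$? no — the actual prime powers are $32 = 2^5$; so if $r = 2$ then $\psi(1)$ could be $2, 4, 8, 16$ or $32$, and multiplying an odd degree ($45$, $55$, $99$) by such a power must land back in $\cd(G)$. One checks $45 \cdot 2 = 90$, $45 \cdot 4 = 180$, etc. are not degrees, so $2$-power values are impossible; similarly for $r \in \{3, 5, 11\}$ one multiplies a degree prime to $r$ by $r^b$ and finds it exceeds or misses the set. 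This rules out the $r$-group case.

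For the Frobenius case I would use parts (b), (b1), (b2) of Lemma~\ref{lem:factsolv}. Here $f \in \cd(G)$, and for each candidate value of $f$ one knows $r^a \equiv 1 \pmod f$ with $a$ minimal, which severely constrains $r$ and $a$; moreover every $\chi(1) \in \cd(G)$ either is divisible by $f$, or has $r^a \mid \chi(1)^2$. The hard part will be organizing the case analysis over the roughly a dozen possible values of $f$: for each $f$ one must exhibit a degree $\chi(1)$ violating both alternatives (typically a degree not divisible by $f$ and coprime to $r$, which forces $r^a \mid \chi(1)^2$ and hence $r \mid \chi(1)$, a contradiction), and also handle the divisibility relations among $f \psi(1)$ and the degree set. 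Since $\cd(M_{12}:2)$ has largest element $176$ and the degrees are quite spread out, most candidate $f$ will be eliminated quickly because $2f$ or $f\psi(1)$ overshoots $176$ or is simply absent; the delicate cases are the small values $f \in \{22, 32\}$ where products stay inside the range, and these will need the finer statements (b1) and (b2) together with the explicit factorizations in Lemma~\ref{lem:M12.2}(b). Once both cases yield contradictions, we conclude $G' = G''$.
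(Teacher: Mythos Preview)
Your approach matches the paper's proof exactly: assume $G'\neq G''$, pick $N$ maximal with $G/N$ nonabelian solvable, and apply Lemma~\ref{lem:factsolv} to split into the $r$-group case and the Frobenius case. Two small corrections will make the sketch clean.

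In case (a), since $\psi\in\Irr(G/N)\subseteq\Irr(G)$, the degree $\psi(1)=r^b$ lies in $\cd(G)$; the only nontrivial prime power there is $32$, so necessarily $r=2$ and $\psi(1)=32$. You need not consider $\psi(1)\in\{2,4,8,16\}$ separately, nor the primes $r\in\{3,5,11\}$ at all. Then a single multiplication, e.g.\ $99\cdot 32\notin\cd(G)$ for the odd degree $\chi(1)=99$, finishes the case.

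In case (b), the ``delicate'' values of $f$ are those with a proper multiple in $\cd(G)$, and these are $f\in\{22,55\}$ (since $66,110,176$ are multiples of $22$ and $110=2\cdot 55$), not $\{22,32\}$. For $f\in\{22,55\}$ the paper uses part~(b)(2) with $\chi(1)=144=2^4\cdot 3^2$: then $r^a\mid 2^8\cdot 3^4$, but no such $r^a$ satisfies $f\mid r^a-1$. For all other $f$ one is in the situation of part~(b)(1), and the paper argues by fixing $r$ and exhibiting two degrees coprime to $r$ whose lcm cannot divide any $f\in\cd(G)$ (e.g.\ for $r=2$ use $45$ and $99$). With these two fixes your outline is complete and coincides with the paper's argument.
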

\begin{proof}
Assume the contrary. Then there is a normal subgroup $N$ of $G$, where $N$ is maximal such that $G/N$ is a nonabelian solvable group. Now we apply Lemma~\ref{lem:factsolv} and we have one of the following cases:\smallskip

\noindent (a) $G/N$ is a $r$-group with $r$ prime. In this case, $G/N$ has an irreducible character $\psi$ of degree $r^b>1$, and so does $G$. Since $M_{12}:2$ has an irreducible character of degree $32$, we conclude that $r=2$. Let now $\chi \in \Irr(G)$ with $\chi(1)=99$. Then Lemma~\ref{lem:gal}(a) implies that $\chi_N\in \Irr(N)$, and so by Lemma~\ref{lem:gal}(b), $G$ has an irreducible character of degree $99\psi(1)$, which is a contradiction.\smallskip

\noindent (b) $G/N$ is a Frobenius group with kernel $F/N$. Then $|G:F| \in \cd(G)$ divides $r^{a}-1$, where $|F/N|=r^{a}$. Let $|G:F|\in \{2\cdot 11,5\cdot 11\}$ and $\chi(1)=2^4\cdot 3^2$. Then Lemma~\ref{lem:clif}(b) implies that $r^a$ divides $\chi^2(1)=2^8\cdot3^4$, and it is impossible as $|G:F|$ does not divide $r^a-1$, for every divisor  $r^a$ of $2^8\cdot3^4$. Let now $|G:F|\not \in \{2\cdot11,5\cdot11\}$. Then no proper multiple of $|G:F|$ is in $\cd(G)$. If $r=2$, then by Lemma~\ref{lem:clif}(a), both $3^2\cdot11$ and $3^2\cdot5$ must divide $|G:F|$, which is a contradiction. If $r=3$, then by Lemma~\ref{lem:clif}(a), $|G:F|$ is divisible by $2^5$ and $2^4\cdot11$, which is a contradiction. Similarly, if $r\neq 2$, $3$, then $2^5$ and $2^4\cdot3^2$ divide $|G:F|$, which is a contradiction.
\end{proof}

\begin{lemma}\label{lem:M12-2}
Let  $G'/M$ be a chief factor of $G$. Then $G'/M \cong M_{12}$.
\end{lemma}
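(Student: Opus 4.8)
The plan is to identify the chief factor $G'/M$ by showing it must be a nonabelian simple group whose character degrees divide degrees of $H$, and then eliminate all candidates except $M_{12}$. First I would use Lemma~\ref{lem:m12-1}, which gives $G'=G''$, to conclude that $G'/M$ is a nonabelian chief factor, hence $G'/M \cong S^k$ for some nonabelian simple group $S$ and some $k\geq 1$. The key observation is that every irreducible character degree of $G'/M$ lies in $\cd(G'/M)$, and since $G'/M$ is a section of $G$ (namely a quotient of the subgroup $G'$), every degree of $G'/M$ must divide some degree in $\cd(G)=\cd(H)$. Indeed, if $\psi\in\Irr(G'/M)$ then $\psi$ inflates to a character of $G'$, which is a constituent of the restriction $\chi_{G'}$ of some $\chi\in\Irr(G)$; by Clifford theory $\psi(1)$ divides $\chi(1)$. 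Thus the character degrees of $S$ (raised to the $k$-th power in the obvious multiplicative sense for $S^k$) divide degrees of $H=M_{12}:2$.

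Next I would invoke Lemma~\ref{lem:M12.2}(d) (equivalently Proposition~\ref{prop:simple} together with Table~\ref{tbl:simple}): the only nonabelian simple groups whose irreducible character degrees divide some degree of $M_{12}:2$ are $A_5$, $A_6$, $L_2(11)$, $M_{11}$ and $M_{12}$. So $S$ is one of these five groups. The case $k\geq 2$ is ruled out immediately: if $G'/M\cong S^k$ with $k\geq 2$, then $G'/M$ has an irreducible character of degree $d^k$ where $d>1$ is the smallest nontrivial degree of $S$ (e.g. $d=3$ for $A_5$, $d=5$ for $A_6$, etc.), and one checks that $d^2$ already fails to divide any degree of $H$ listed in Lemma~\ref{lem:M12.2}(b) — for instance $9$ divides $45$ and $99$ and $144$, so for $S=A_6$ we would instead use that $A_6$ has degree $10$ and $10^2=100$ divides no degree of $H$; similar elementary checks dispose of the rest. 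Hence $k=1$ and $G'/M\cong S$ with $S\in\{A_5,A_6,L_2(11),M_{11},M_{12}\}$.

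The main work is then eliminating $A_5$, $A_6$, $L_2(11)$ and $M_{11}$. For each such $S$, I would argue as follows: by Lemma~\ref{lem:exten}(a) there is a nontrivial $\varphi\in\Irr(S)$ extending to $\Aut(S)$, and since $G/M$ acts on $G'/M\cong S$ inducing automorphisms, $\varphi$ extends to an irreducible character of $G/M$; combining with Gallagher (Lemma~\ref{lem:gal}(b)) applied to suitable sections, one produces character degrees of $G$ of the form $\varphi(1)\cdot t$ for various $t\in\cd(G/G')$, and also products of distinct degrees of $H$ that are forced to lie in $\cd(G)$. The contradiction comes from the fact that $\cd(H)$ is "too small": for the smaller groups $S$, the set $\cd(G)$ would have to contain a degree strictly larger than $176$ (the largest degree of $H$), or a degree with a prime divisor or prime-power divisor not occurring in $\cd(H)$. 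Concretely, for $S=M_{11}$ one uses that $M_{11}$ has degrees $1,10,11,16,44,45,55$ and then multiplies by a degree coming from the action of the outer part to exceed $176$ or to get a forbidden degree like $10\cdot 16$; for $L_2(11)$ with degrees $1,5,10,11,12$ the degree $12$ is already not in $\cd(H)$ and does not divide any degree of $H$, an immediate contradiction; for $A_6$ the degree $9\cdot 10$ type products are forbidden; for $A_5$ one shows $\cd(G)$ cannot contain $176$ at all when $G'/M\cong A_5$ because the relevant multiples are too constrained. I expect the delicate point — and the step that will need the most care — to be the $M_{11}$ and $A_6$ cases, where the degree sets are rich enough that one must carefully track which products of $H$-degrees are genuinely forced into $\cd(G)$ by the extension/Gallagher machinery (using Lemma~\ref{lem:factsolv} on the solvable quotient $G/G'$ to control how degrees of $G/M$ combine), rather than relying on a crude size estimate. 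Once all four alternatives are excluded, $G'/M\cong M_{12}$, completing the proof.
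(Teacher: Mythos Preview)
Your overall framework---write $G'/M\cong S^{k}$, restrict $S$ to the list in Lemma~\ref{lem:M12.2}(d), force $k=1$, then eliminate the four small candidates---matches the paper. The problems are all in the elimination step.

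First, a concrete error: you claim that $L_{2}(11)$ has degree $12$ which ``does not divide any degree of $H$''. But $12$ divides both $120$ and $144$, which are in $\cd(M_{12}{:}2)$, so this eliminates nothing. Your proposed eliminations of $A_{5}$, $A_{6}$ and $M_{11}$ are also not proofs as written: the Gallagher/extension machinery you invoke produces products $\varphi(1)\tau(1)$ with $\tau\in\Irr(G/G')$, but you never establish that $G/G'$ has any nontrivial degree to multiply by, and indeed in the end $G/G'$ will turn out to be abelian, so that route is a dead end. The suggestion that for $A_{5}$ one shows $176\notin\cd(G)$ is not substantiated and I do not see how to make it work.

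The paper's argument is both simpler and uniform. For each $S\in\{A_{5},A_{6},L_{2}(11),M_{11}\}$ one picks a small degree $\psi(1)\in\{5,10\}$ of $S$ and bounds the degree of any constituent $\chi$ of $\psi^{G/M}$. The key point you are missing is that if $C/M=C_{G/M}(G'/M)$, then $G'C/M\cong G'/M\times C/M$, so $\psi$ extends to $G'C/M$; since $(G/M)/(G'C/M)$ embeds in $\Out(S)$, the general fact that $\chi(1)/\theta(1)$ divides $|G:N|$ for $\theta\in\Irr(N)$ and $\chi\in\Irr(G\,|\,\theta)$ gives $\chi(1)=t\psi(1)$ with $t$ dividing $|\Out(S)|\leq 4$. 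Thus $G$ has a nontrivial degree at most $20$, contradicting the fact that the smallest nontrivial degree in $\cd(M_{12}{:}2)$ is $22$. This single observation disposes of all four cases at once; no product-of-degrees bookkeeping is needed.
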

\begin{proof}
Suppose $G'/M\cong S^k$, where $S$ is a nonabelian simple group for some positive integer $k$. Since $S$ is a finite nonabelian simple group whose irreducible character degrees divide some degrees of $M_{12}:2$, by Lemma~\ref{lem:M12.2}(d), $S$ is isomorphic to one of the groups $A_5$, $A_6$, $M_{11}$, $M_{12}$ or $L_2(11)$. If $S$ is isomorphic to one of the groups $A_5$, $A_6$, $M_{11}$, $M_{12}$, $L_2(11)$, then $k=1$ as $G$ has no degree divisible by $5^2$. Assume that $S$ is isomorphic to $A_5$ or $A_6$, then $G'/M$ has a character $\psi$ of degree $5$. If $\chi$ is an irreducible constituent of $\psi^{G/M}$, then $\chi(1)=t\psi(1)=5t$, where $t$ divides $|\Out(S)|$. Consequently, $G$ has a character of degree at most $20$, which is a contradiction. Similarly, in the case where $S$ is isomorphic to $M_{11}$ or $L_2(11)$, the factor group $G/M$ has a character of degree $10t$ with $t=1,2$, and this implies that $G$ has a character of degree at most $20$, which is a contradiction.
Therefore $G'/M$ is isomorphic to $M_{12}$.
\end{proof}
\begin{lemma}\label{lem:M12-3}
Let $\theta \in \Irr(M)$ with $\theta(1)=1$. Then $I_{G'}(\theta)=G'$  and $M=M'$.
\end{lemma}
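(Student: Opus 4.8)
The goal is to show that every linear character $\theta\in\Irr(M)$ is $G'$-invariant, after which Lemma~\ref{lem:schur} (applied with $G'=G''$ from Lemma~\ref{lem:m12-1} and $G'/M\cong M_{12}$ from Lemma~\ref{lem:M12-2}) yields $M'=[M,G']$ and $|M/M'|\mid |\mathrm{Mult}(M_{12})|=2$. The inertia group $I:=I_{G'}(\theta)$ contains $M$ and is normal in $G'$ only if it equals $G'$; in general $M\le I\le G'$, and since $G'/M\cong M_{12}$ is simple, either $I=G'$ or $I/M$ is a proper subgroup of $M_{12}$. So I would assume for contradiction that $I<G'$, i.e. $I/M$ is contained in a maximal subgroup $K/M$ of $G'/M$, and derive a forbidden character degree.

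**Key steps.** First, by Clifford correspondence (Lemma~\ref{lem:clif}(a)), any irreducible constituent of $\theta^I$ induces irreducibly to $G'$, so $|G':I|$ divides some degree of $G'$, hence divides some degree of $H=M_{12}:2$; in fact, since $\theta(1)=1$, the product $\phi(1)\cdot|G':I|$ lies in $\cd(G')\subseteq\cd(G)$ for each constituent $\phi$ of $\theta^I$. In particular $|G':I|=|G'/M:I/M|$ is an index of a subgroup of $M_{12}$ dividing a degree of $H$; combined with Lemma~\ref{lem:M12.2}(c) I would narrow $K/M$ to the three listed maximal subgroup types $M_{11}$, $M_{10}{:}2$, $L_{2}(11)$ (of indices $12$, $66$, $144$ respectively), and in each case the quotient $\chi(1)/|G'/M:K/M|$ is tightly constrained. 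Second — and this is the crux — I want to rule out $I/M$ being proper at all. The standard device (as in Huppert's original arguments and the sporadic-group papers) is to invoke the extendibility result: by Lemma~\ref{lem:exten}(a) the simple group $M_{12}$ has a nontrivial irreducible character extending to $\mathrm{Aut}(M_{12})=M_{12}{:}2$, and one plays this against the constraint that $I/M$ avoids the relevant maximal subgroups; alternatively, one shows that if $\theta$ is not $G'$-invariant then $G'$ acquires a character degree of the form (degree of a proper subgroup of $M_{12}$) times a nontrivial factor, which forces a degree not in $\cd(M_{12}{:}2)$ — e.g. a multiple of a small degree like $12\cdot$ something, or a degree divisible by $5^2$, contradicting Lemma~\ref{lem:M12.2}(b). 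Third, once $I=G'$ for all linear $\theta$, conclude $M=M'$ by Lemma~\ref{lem:schur} together with $M/M'$ being a $2$-group of order dividing $2$: here one needs the extra observation that $M/M'$, being central-type and bounded by the Schur multiplier, cannot actually contribute a new degree, so in fact $M=M'$; the papers typically close this by noting $G'/M'$ would otherwise be a non-split or split extension producing a degree-$2$ complication incompatible with the degree set, forcing $|M/M'|=1$.

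**Main obstacle.** The delicate point is the second step: showing the inertia group cannot be proper. Merely knowing $|G':I|$ divides a degree of $H$ is not enough, since $12$, $66$ and $144$ are all (divisors of) degrees in $\cd(M_{12}{:}2)$; I must use the finer information about $\chi(1)/|G':I|$ from Lemma~\ref{lem:M12.2}(c) and the multiplicity/extendibility structure of the Clifford theory over $I/M\le K/M$. Concretely, for $K/M\cong M_{11}$ the constituent degrees $\phi(1)$ of $\theta^I$ must satisfy $\phi(1)\cdot 12\in\{120,144\}$ forcing $\phi(1)\in\{10,12\}$, but $\phi(1)$ is a degree of a character of $I$ lying over the linear $\theta$, and analysing the possible degrees of $M_{11}$ (or its relevant subgroups) together with Gallagher rules out these values; the cases $M_{10}{:}2$ and $L_{2}(11)$ force $\chi(1)/|G':I|=1$, i.e. $\theta$ extends to $I$ and then Gallagher over $I/M$ produces every degree of that subgroup scaled by $|G':I|$, again landing outside $\cd(M_{12}{:}2)$. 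Pushing each of these to an explicit contradiction — and handling the interplay with the $2$-part coming from the $:2$ extension of the socle — is where the real work lies; the rest is bookkeeping with Lemmas~\ref{lem:gal}, \ref{lem:clif}, \ref{lem:schur}.
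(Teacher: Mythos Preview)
Your overall strategy is exactly the paper's: assume $I=I_{G'}(\theta)<G'$, locate $I/M$ inside a maximal $U/M$ of $G'/M\cong M_{12}$, use Lemma~\ref{lem:M12.2}(c) to restrict $U/M$ to $M_{11}$, $M_{10}{:}2$ or $L_2(11)$, derive a contradiction in each case, then finish with Lemma~\ref{lem:schur}. Your treatment of the $M_{10}{:}2$ and $L_2(11)$ cases is correct and matches the paper: $\theta$ extends to $I$, Gallagher produces $|G':I|\cdot\tau(1)$ for all $\tau\in\Irr(I/M)$, and $\tau(1)=10$ kills both.

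The $M_{11}$ case, however, has a genuine gap. First, a small error: from $12\phi_i(1)$ dividing some degree of $H$ you only get $\phi_i(1)\mid 10$ or $\phi_i(1)\mid 12$, not $\phi_i(1)\in\{10,12\}$; in particular $\phi_i(1)=1$ is allowed, so your plan to ``rule out these values'' is aimed in the wrong direction. The actual argument (and the paper's) runs the other way: when $I/M\cong M_{11}$, the crucial fact is that $M_{11}$ has \emph{trivial} Schur multiplier, so the linear $\theta$ \emph{must} extend to $I$. Then Gallagher forces every degree of $M_{11}$ to occur as some $\phi_i(1)$; taking $\tau(1)=55$ gives $12\cdot 55=660$ dividing a degree of $H$, which is impossible. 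Second, you must also handle $I/M$ properly contained in $U/M\cong M_{11}$. Here the paper observes that the only maximal subgroup of $M_{11}$ whose index divides $10$ or $12$ is $L_2(11)$ (index $12$), so $I/M\cong L_2(11)$ with $|G':I|=144$ and $\phi_i(1)=1$; then the same extension-plus-Gallagher device with $\tau(1)=10$ finishes. Your parenthetical ``or its relevant subgroups'' does not cover this. Finally, your invocation of Lemma~\ref{lem:exten}(a) is misplaced: that lemma concerns characters of $M_{12}$ extending to $\Aut(M_{12})$ and plays no role here --- what you need is extendibility of $\theta$ from $M$ to $I$, which is governed by the Schur multiplier of $I/M$, not of $M_{12}$.

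For the closing step $M=M'$: your sketch is too vague. Once $|M/M'|\mid 2$, the case $|M/M'|=2$ forces $G'/M'\cong 2{\cdot}M_{12}$ (perfect central extension), and one rules this out by exhibiting a faithful degree of $2{\cdot}M_{12}$ (e.g.\ $160$ or $220$) that divides no degree of $M_{12}{:}2$.
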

\begin{proof}
Suppose  $I=I_{G'}(\theta)<G'$. By Lemma~\ref{lem:clif}, we have $\theta^I= \sum_{i=1}^{k} \phi_i$ where $\phi_i \in \Irr(I)$ for $ i=1,2,...,k$. Let $U/M$ be a maximal subgroup of $G'/M\cong M_{12}$ containing $I/M$ and set $t:= |U:I|$. It follows from Lemma~\ref{lem:clif}(a) that $\phi_i(1)|G':I| \in \cd(G')$, and so $t\phi_i(1)|G':U|$ divides some degrees of $G$. Then $|G':U|$ must divide some character degrees of $G$, and hence by Lemma~\ref{lem:M12.2}(c) one of the following holds. \smallskip

\noindent (i) Suppose  $U/M\cong M_{11}$. Then $t\phi_i(1)$ divide $2\cdot5$ or $2^2\cdot3$. If $t=1$, then $I/M\cong M_{11}$. Since $M_{11}$ has trivial Schur multiplier, it follows that $\theta$ extends to $\theta _0\in \Irr(I)$, and so by Lemma~\ref{lem:clif}(b) $(\theta_0\tau )^{G'}\in \Irr(G')$, for all $\tau \in \Irr(I/M)$. For $\tau (1)=55\in \cd(M_{11})$, it turns out that $12\cdot55 \cdot\theta_0(1)$ divide some degrees of $G$, which is a contradiction. Therefore, $t\neq 1$, and hence the index of a maximal subgroup of $U/M \cong M_{11}$ containing $I/M$ must divide $2\cdot5$ or $2^2\cdot3$. This implies that $t\phi_i(1)$ divides $2^2\cdot3$ and $I/M \cong L_2(11)$. In particular, $\phi_{i}(1)=1$. Thus $\theta$ extends to a $\phi_{i}$, and so by Lemma~\ref{lem:clif}(b), $144\tau(1) \in \cd(G')$, for all $\tau \in \Irr(I/M)$. This leads us to a contradiction by taking $\tau(1)=10\in \cd(L_2(11))$.\smallskip

\noindent (ii) Suppose  $U/M\cong M_{10}:2$. In this case $t=1$, or equivalently, $I/M=U/M\cong M_{10}:2$. Moreover, $\phi_i(1)=1$, for all $i$. Then $\theta$ extends to $\phi_i \in \Irr(I)$, and so by Lemma~\ref{lem:clif}(b), $66 \tau (1)$ divides some degrees of $G$, for $\tau (1)=10$, which is a contradiction.\smallskip

\noindent (iii) Suppose  $U/M\cong L_2(11)$ and  $t=\phi_{i}(1)=1$, for all $i$. Then $I/M \cong L_2(11)$, and so $\theta$ extends to $\phi_i \in \Irr(I)$. Thus $144 \tau (1)\in \Irr(G')$, for all $\tau \in \Irr(I/M)$. This is impossible by taking $\tau (1)=10$.

Therefore, $I_{G'}(\theta)=G'$. By Lemma~\ref{lem:schur}, we have that $|M/M'|$ divides the order of Schur multiplier of $G'/M\cong M_{12}$ which is $2$. If $|M/M'|=2$, then $G'/M'$ is isomorphic to $2\cdot M_{12}$ which has a character of degree $32$ \cite[p. 33]{Atlas}. Therefore $M_{12}$ must have a degree divisible by $32$, which is a contradiction. Hence $|M/M'|=1$, or equivalently, $M=M'$.
\end{proof}
\begin{lemma}\label{lem:M12-4}
The subgroup $M$ is trivial, and hence $G'\cong M_{12}$.
\end{lemma}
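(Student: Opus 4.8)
The plan is to show that $M=1$ by eliminating the possibility that $M$ is a nontrivial minimal normal subgroup of $G$ contained in our fixed chief factor data. By Lemma~\ref{lem:M12-3} we already know $M=M'$, so $M$ is perfect; in particular $M$ has no nontrivial abelian normal subgroup quotient, and if $M\neq 1$ we may pass to a chief factor $M/L$ of $G$ with $M/L\cong S^k$ for some nonabelian simple group $S$. The first step is to bound which $S$ can occur: since every irreducible character degree of $S$ must (after inducing up through $G'$ and then $G$) divide some degree in $\cd(M_{12}:2)$, Lemma~\ref{lem:M12.2}(d) forces $S\in\{A_5,A_6,L_2(11),M_{11},M_{12}\}$, and the absence of $25$ as a divisor of any degree forces $k=1$. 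So $M/L$ is one of these five simple groups.

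The second step is to derive a contradiction in each case using the extension/induction machinery. By Lemma~\ref{lem:exten}(a), $S=M/L$ (modulo $L$) has a nontrivial irreducible character $\varphi$ that extends to $\mathrm{Aut}(S)$; combined with the fact that $G'/M\cong M_{12}$ (which has nontrivial irreducible characters), the idea is to produce a character of $G'$ whose degree is a product $\varphi(1)\cdot\psi(1)$ with $\psi$ a character of $G'/M$, and show this product — or a multiple of it forced by Clifford theory — exceeds $176$ or is not a divisor of any degree in $\cd(M_{12}:2)$. Concretely, I would invoke the structure of $G'/L$: since $M/L$ is a minimal normal subgroup of $G'/L$ and $\varphi$ extends to $\mathrm{Aut}(M/L)$, Gallagher's theorem (Lemma~\ref{lem:gal}(b)) applied after extending $\varphi$ to $G'/L$ yields $\varphi(1)\psi(1)\in\cd(G')$ for all $\psi\in\cd(G'/M)=\cd(M_{12})$. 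Taking $\psi(1)=176$ (or $144$, $120$, $110$ as convenient) and $\varphi(1)$ the smallest nontrivial degree of $S$ (which is $5$ for $A_5,A_6$, $5$ or $10$ for $L_2(11)$, $10$ for $M_{11}$, $11$ for $M_{12}$) gives a degree larger than any in $\cd(M_{12}:2)$, a contradiction. One must be slightly careful that $\varphi$ extends all the way to $G'$ rather than just to $\mathrm{Aut}(M/L)$; Lemma~\ref{lem:exten}(b) handles the case where $M/L$ itself is a minimal normal subgroup of $G'/L$, and the centralizer $C_{G'/L}(M/L)$ must be trivial since $G'/M\cong M_{12}$ is simple and $M/L$ is characteristically simple, so $G'/L$ embeds in $\mathrm{Aut}(S)$ on the $M/L$-part.

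I expect the main obstacle to be the bookkeeping around \emph{which} group $G'/L$ actually is and whether the chosen character of $M/L$ genuinely extends to $G'$: a priori $G'/L$ is an extension of $M/L$ by $M_{12}$, and one needs $M/L$ to be complemented or at least the relevant character to extend. The clean way around this is to take $L$ maximal such that $M/L$ is a chief factor, use that $M=M'$ forces $M/L$ nonabelian simple (not just elementary abelian), and then apply Lemma~\ref{lem:exten}(b) with $G$ replaced by $G'/L$ and $N=M/L$ — this gives an extension of $\varphi$ to all of $G'/L$ directly, bypassing any splitting question. Once the extension exists, Gallagher plus the degree list of $M_{12}$ closes every case, so $M/L$ cannot exist, whence $M=1$ and $G'\cong G'/M\cong M_{12}$ by Lemma~\ref{lem:M12-2}.
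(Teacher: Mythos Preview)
Your proposal is correct and lands on exactly the paper's argument: pick a chief factor $M/N\cong S^{k}$ below $M$, use Lemma~\ref{lem:exten}(b) to extend a nontrivial $\varphi\in\Irr(S)$ (with $\varphi^{k}$) to $G'/N$, apply Gallagher with the largest degree $\tau(1)=176$ of $G'/M\cong M_{12}$, and observe that $\varphi(1)^{k}\cdot 176$ divides no element of $\cd(M_{12}{:}2)$, forcing $M$ abelian and hence $M=M'=1$. Two remarks: the paper takes $M/N$ as a chief factor of $G'$ (so that Lemma~\ref{lem:exten}(b) applies to $G'/N$ directly) and never classifies $S$ or proves $k=1$, since $\varphi(1)^{k}\geq 2$ already suffices; and your side claim that $C_{G'/L}(M/L)$ must be trivial is not correct in general (it could be a complement isomorphic to $M_{12}$), but you rightly abandon that line in favor of Lemma~\ref{lem:exten}(b), which is precisely what the paper does.
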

\begin{proof}
By Lemmas~\ref{lem:M12-2} and \ref{lem:M12-3}, we have that $G'/M \cong M_{12}$ and $M=M'$. Suppose that $M$ is nonabelian, and let $N\leq M$ be a normal subgroup of $G'$ such that $M/N$ is a chief factor of $G'$. Then $M/N\cong S^{k}$, for some nonabelian simple group $S$. It follows from Lemma~\ref{lem:exten} that $S$ possesses a nontrivial irreducible character $\varphi$ such that $\varphi^{k}\in \Irr(M/N)$ extends to $G'/N$. By Lemma~\ref{lem:gal}(b), we must have $\varphi(1)^{k}\tau(1)\in \cd(G'/N)\subseteq \cd(G')$, for all $\tau \in \Irr(G'/M)$. Now we can choose $\tau\in G'/M$ such that $\tau(1)$ is the largest degree of $M_{12}$, and since $\varphi$ is nontrivial,  $\varphi(1)^{k}\tau(1)$ divides no degree of $G$, which is a contradiction. Therefore, $M$ is abelian, and since $M=M'$, we conclude that $M=1$.
\end{proof}

\begin{lemma}\label{lem:M12-5}
There exists an abelian group $A$ such that $G/A\cong M_{12}:2$.
\end{lemma}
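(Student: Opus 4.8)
The plan is to realize the abelian group $A$ as the centralizer $C:=C_G(G')$ and to identify $G/C$ with $\Aut(M_{12})=M_{12}:2$. By Lemma~\ref{lem:M12-4} we already have $G'\cong M_{12}$, and since $M_{12}$ has trivial centre, $G'\cap C=Z(G')=1$. Hence $G'C=G'\times C$ is a normal subgroup of $G$, and the $N/C$-theorem gives an embedding of $G/C$ into $\Aut(G')=\Aut(M_{12})$ whose image contains $\operatorname{Inn}(G')\cong G'\cong M_{12}$, realized inside $G/C$ as the normal subgroup $G'C/C$. Because $|\Aut(M_{12}):M_{12}|=2$, it follows that $G/C$ is isomorphic either to $M_{12}$ or to $M_{12}:2$, and correspondingly $|G:G'C|\in\{1,2\}$.

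The first substantive point is to show that $C$ is abelian. Here I would take $\chi\in\Irr(G')$ with $\chi(1)=176$, the largest degree of $M_{12}$, together with an arbitrary $\lambda\in\Irr(C)$; then the outer tensor product $\chi\times\lambda$ is an irreducible character of $G'\times C=G'C$ of degree $176\,\lambda(1)$. Since $G'C\unlhd G$ with $|G:G'C|\leq 2$, Clifford theory shows that $176\,\lambda(1)$ divides some member of $\cd(G)=\cd(M_{12}:2)$: over a normal subgroup of prime index every irreducible character either extends (giving the same degree in $G$) or induces irreducibly (giving twice the degree in $G$). As $176$ is the largest element of $\cd(M_{12}:2)$, this forces $\lambda(1)=1$, and since $\lambda$ was arbitrary, $C$ is abelian.

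It then remains to exclude $G/C\cong M_{12}$. If that occurred, then $G=G'C=G'\times C$ with $C$ abelian, so $\cd(G)=\cd(G')=\cd(M_{12})$; but $32\in\cd(M_{12}:2)$ while $32\notin\cd(M_{12})$ (compare Lemma~\ref{lem:M12.2}(b) with the degrees of $M_{12}$), contradicting $\cd(G)=\cd(M_{12}:2)$. Therefore $G/C\cong M_{12}:2$, and taking $A:=C$ finishes the proof.

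I do not anticipate a real obstacle: the only step requiring a little care is the Clifford-theoretic observation that the degrees of the index-at-most-$2$ normal subgroup $G'C$ divide degrees of $G$, and that is entirely standard. The remaining ingredients — triviality of $Z(M_{12})$, the equality $\Aut(M_{12})=M_{12}:2$, and the fact that $\cd(M_{12})\neq\cd(M_{12}:2)$ — are all immediate from the data recorded in Lemma~\ref{lem:M12.2} and the Atlas.
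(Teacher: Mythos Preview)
Your proof is correct and follows the same route as the paper's: take $A=C_G(G')$, use the $N/C$-theorem to embed $G/A$ into $\Aut(M_{12})$, and rule out $G/A\cong M_{12}$ via the degree $32\in\cd(M_{12}{:}2)\setminus\cd(M_{12})$. The only difference is that you supply an explicit Clifford-theoretic argument that $A$ is abelian, whereas the paper leaves this implicit; note that it follows more cheaply from $A\cap G'=1$, which gives an embedding $A\hookrightarrow G/G'$ into an abelian group.
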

\begin{proof}
Set $A:= C_G(G')$. Since $G'\cap A=1$ and $G'A\cong G' \times A$, it follows that $G'\cong G'A/A\unlhd G/A\leq \Aut(G')$. By Lemma~\ref{lem:M12-4}, we have $G'\cong M_{12}$, and so we conclude that $G/A$ is isomorphic to $M_{12}$ or $M_{12}:2$. In the case where $G/A$ is isomorphic to $M_{12}$, we must have $G\cong A\times M_{12}$. This is impossible as $32 \in \cd(G)$ but $M_{12}$ has no character of degree $32$. Therefore, $G/A$ is isomorphic to $M_{12}:2$.
\end{proof}

\section{Groups with socle $M_{22}$}\label{sec:M22.2}

In this section, we prove Theorem~\ref{thm:main} for almost simple group $H$ whose socle is $H_{0}:=M_{22}$. Note that Theorem~\ref{thm:main} is proved for $H=H_{0}=M_{22}$, see \cite{Hupp-II-VIII}. Therefore, we only need to focus on case where $H:=M_{22}:2$. For convenience, we mention some properties of $H$ and $H_{0}$ some of which can be obtained from \cite[pp. 39-41]{Atlas}.

\begin{lemma}\label{lem:M22.2}
Let $H_{0}:=M_{22}$ and $H:=M_{22}:2$. Then
\begin{enumerate}
  \item[(a)] The Schur multiplier $H_{0}$ is $\Zbb_{12}$ and the group of outer automorphisms of $H_{0}$ is $\Zbb_{2}$;
  \item[(b)] The degrees of irreducible characters of $H$ are
      \begin{align*}
      1& &
      55&=5\cdot 11 &
      210&=2\cdot 3\cdot 5\cdot 7&
      385&=5\cdot 7\cdot 11\\
      21&=3\cdot 7 &
      99&=3^{2}\cdot 11  &
      231&=3\cdot7\cdot11 &
      560&=2^{4}\cdot 5\cdot 7\\
      45&=3^{2}\cdot 5 &
      154&=2\cdot 7\cdot 11  &
      &&
      \end{align*}
  \item[(c)] If $K$ is a maximal subgroup of $H_{0}$ whose index in $H_{0}$ divides some degrees $\chi(1)$ of $H$, then one of the following occurs:
      \begin{enumerate}[{ \quad (i)}]
        \item $K\cong L_3(4)$ and $\chi(1)/|H_{0}:K|$ divides $7$;
        \item $K\cong 2^4: S_5$ and  $\chi(1)/|H_{0}:K|=1$;
        \item $K\cong 2^4:A_6$ and  $\chi(1)/|H_{0}:K|$ divides $2$, $3$ or $5$.
      \end{enumerate}
  \item[(d)] If $S$ is a finite nonabelian simple group whose irreducible character degrees divide some degrees of $H$, then $S$ is isomorphic to $A_5$, $A_{6}$, $A_{7}$, $L_2(7)$, $L_2(8)$ or $M_{22}$.
\end{enumerate}
\end{lemma}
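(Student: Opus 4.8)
This is a collection of arithmetic and structural facts about $H_0 = M_{22}$ and $H = M_{22}:2$, so the plan is to extract the first three items from the \textsc{Atlas} and obtain the fourth from work already carried out in this section. For parts (a) and (b) I would quote \cite[pp.~39--41]{Atlas}: the Schur multiplier of $M_{22}$ is $\Zbb_{12}$ and $\Out(M_{22}) = \Zbb_2$, while the character table of $M_{22}:2$ yields the listed degrees $1, 21, 45, 55, 99, 154, 210, 231, 385, 560$ (each irreducible character of $M_{22}$ either extends to $M_{22}:2$ or fuses with its algebraic conjugate, and the degrees of $M_{22}:2$ are read off accordingly). Part (d) I would deduce from Proposition~\ref{prop:simple} and Table~\ref{tbl:simple}; indeed the proof of that proposition already treats $H = M_{22}:2$ and shows that any such $S$ lies in $\mathcal{A}_1 = \{A_5, A_6, A_7, L_2(7), L_2(8), M_{22}\}$.

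Part (c) I would settle by a direct computation with the maximal subgroups of $M_{22}$. Up to conjugacy these are $L_3(4)$, $2^4:A_6$, $A_7$, $2^4:S_5$, $2^3:L_3(2)$, $M_{10}$ and $L_2(11)$, of indices $22$, $77$, $176$, $231$, $330$, $616$, $672$ in $M_{22}$ respectively (\cite[p.~39]{Atlas}). I would then test, against the degree set in (b), which indices $|H_0:K|$ divide a character degree $\chi(1)$ of $H$, and for those that do, record the possible values of $\chi(1)/|H_0:K|$. The index $22 = 2\cdot 11$ divides only $154 = 2\cdot 7\cdot 11$, with quotient $7$, giving case (i); the index $77 = 7\cdot 11$ divides $154$, $231$ and $385$, with quotients $2$, $3$, $5$, giving case (iii); and the index $231 = 3\cdot 7\cdot 11$ divides only $231$, with quotient $1$, giving case (ii). The remaining indices divide no degree in (b): $176 = 2^4\cdot 11$ since no degree divisible by $11$ is divisible by $16$; $330 = 2\cdot 3\cdot 5\cdot 11$ since no degree divisible by $55$ is also divisible by $6$; $616 = 2^3\cdot 7\cdot 11$ since the only degrees divisible by $77$ are $154$, $231$, $385$, none divisible by $8$; and $672 = 2^5\cdot 3\cdot 7$ since every degree in (b) is at most $560 < 672$.

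The hard part is essentially nil here: the only places needing care are using the full \textsc{Atlas} list of maximal subgroups of $M_{22}$, so that no conjugacy class is overlooked, and verifying the divisibility relations among the small integers involved; everything else is bookkeeping parallel to the proof of Lemma~\ref{lem:M12.2}.
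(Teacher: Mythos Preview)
Your proposal is correct and follows essentially the same approach as the paper: parts (a) and (b) are quoted from the \textsc{Atlas}, part (d) is read off from Proposition~\ref{prop:simple} and Table~\ref{tbl:simple}, and part (c) is handled by the direct check against the list of maximal subgroups of $M_{22}$ that the paper summarizes as ``a straightforward calculation.'' Your explicit case-by-case verification of (c) is more detailed than what the paper records, and you cite the correct \textsc{Atlas} pages (pp.~39--41) for $M_{22}$ rather than the $M_{12}$ pages that appear, presumably by a slip, in the paper's proof.
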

\begin{proof}
Parts (a)-(b) follows from \cite[pp. 31-33]{Atlas} part (d) follows from Proposition~\ref{prop:simple} and Table~\ref{tbl:simple}. Part (c) is a straightforward calculation.
\end{proof}

\subsection{Proof of Theorem~\ref{thm:main} for $M_{22}:2$}\label{sec:proof-M12.2}

Theorem~\ref{thm:main} is true for the Mathieu group $M_{22}$ by \cite{Hupp-II-VIII}. It remains to assume that $H:=M_{22}:2$. In what follows assume that $G$ is a finite group with $\cd(G)=\cd(M_{22}:2)$. The proof follows from the following lemmas.

\begin{lemma}\label{lem:m22-1}
$G'=G''$.
\end{lemma}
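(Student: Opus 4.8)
The plan is to mirror the argument used in Lemma~\ref{lem:m12-1} for $M_{12}:2$, now working with $\cd(G)=\cd(M_{22}:2)$ as listed in Lemma~\ref{lem:M22.2}(b). First I would argue by contradiction: if $G'\neq G''$, pick $N\unlhd G$ maximal with $G/N$ nonabelian solvable, and apply Lemma~\ref{lem:factsolv}, splitting into the two cases there.

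In case (a), $G/N$ is an $r$-group, so $G$ has an irreducible character of degree $r^b>1$; inspecting $\cd(M_{22}:2)=\{1,21,45,55,99,154,210,231,385,560\}$, the only prime-power degrees are powers of prime divisors of these numbers — and since every degree is either prime to $r$ or forces $r\mid$ that degree, I would look for a degree $\chi(1)$ coprime to $r$ and then use Lemma~\ref{lem:gal}(b) (via Lemma~\ref{lem:gal}(a) to get $\chi_N\in\Irr(N)$) to produce a degree $\chi(1)r^b$ not in $\cd(G)$. For instance if $r=2$, taking $\chi(1)=99$ or $\chi(1)=385$ (both odd) gives a larger odd-part-times-power-of-two degree that cannot lie in $\cd(G)$; similar choices handle $r=3,5,7,11$ by choosing a degree not divisible by $r$ (e.g. $\chi(1)=55$ for $r=3$ or $r=7$, $\chi(1)=21$ for $r=5$ or $r=11$) and noting the resulting multiple exceeds $560$ or has the wrong shape.

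In case (b), $G/N$ is a Frobenius group with kernel $F/N$ of order $r^a$ and complement index $f=|G:F|\in\cd(G)$ with $f\mid r^a-1$. I would first dispose of the $f$ that admit a proper multiple in $\cd(G)$ (such as $f\in\{21,45,55,77,\dots\}$-type divisors, e.g. $f=55$ since $385=7\cdot 55$, or $f=21$ since $210,231$ are multiples) by applying Lemma~\ref{lem:clif}(b): choosing a suitable $\chi(1)$ whose square is coprime to the relevant prime contradicts $r^a\mid\chi(1)^2$ together with $f\mid r^a-1$. For the remaining $f$ (those with no proper multiple in $\cd(G)$, e.g. $f=560$, $f=385$, $f=231$, $f=210$, $f=99$), I would invoke Lemma~\ref{lem:factsolv}(b)(1): every $\chi(1)$ with $r\nmid\chi(1)$ divides $f$, and every $\chi(1)\nmid f$ satisfies $r^a\mid\chi(1)^2$. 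Running through the prime $r$ that could divide $|F/N|$, this forces two incompatible degrees to divide $f$ (e.g. for $r=2$, the odd degrees $45$ and $99$ — hence $\mathrm{lcm}(45,99)=495$ — would have to divide $f$, impossible since no degree of $M_{22}:2$ is a multiple of $495$), and analogously for $r=3,5,7,11$ one finds two degrees prime to $r$ whose least common multiple divides no degree, a contradiction.

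The main obstacle is purely bookkeeping: one must verify, for each admissible value of $f\in\cd(G)$ and each prime $r\mid|F/N|$, that the divisibility constraints from Lemma~\ref{lem:factsolv}(b) are genuinely violated — this requires checking that certain products and least common multiples of degrees of $M_{22}:2$ never reappear in $\cd(M_{22}:2)$, and that $f\nmid r^a-1$ for all relevant $r^a$ dividing the square of a chosen degree. Since $\cd(M_{22}:2)$ is small and its degrees factor into the primes $2,3,5,7,11$ only, each such check is a short finite computation, and no single case is delicate; the argument is a routine adaptation of the $M_{12}:2$ proof with the new degree list.
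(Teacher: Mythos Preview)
Your plan is correct and, in case~(b), matches the paper's proof almost exactly: the paper also isolates $f\in\{21,55\}=\{3\cdot7,5\cdot11\}$ as the only degrees with a proper multiple in $\cd(M_{22}:2)$, eliminates them via Lemma~\ref{lem:factsolv}(b)(2) using $\chi(1)=154$, and for the remaining $f$ runs through the primes $r$ exhibiting two $r$-coprime degrees whose lcm divides no element of $\cd(G)$.

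In case~(a), however, you are working harder than necessary. The paper simply observes that $\cd(M_{22}:2)$ contains \emph{no} nontrivial prime power whatsoever (every degree $>1$ has at least two distinct prime factors), so the mere existence of $\psi\in\Irr(G/N)$ with $\psi(1)=r^b>1$ is already a contradiction; no Gallagher argument is needed. Your more elaborate route would also succeed, but it is redundant here---this is a genuine difference from the $M_{12}:2$ situation, where $32\in\cd(H)$ forces $r=2$ and the Gallagher step is actually required.
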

\begin{proof}
Assume the contrary. Then there is a normal subgroup $N$ of $G$ where $N$ is a maximal such that $G/N$ is a nonabelian solvable group. We now apply Lemma $\ref{lem:factsolv}$, and so we have the following two cases: \smallskip

\noindent (a) Suppose that $G/N$ is a $r$-group with $r$ prime. Then it has an irreducible character $\psi$ of degree $r^b>1$. It is impossible as the group $M_{22}:2$ does not have a irreducible character of prime power degree.\smallskip

\noindent (b) $G/N$ is a Frobenius group with kernel $F/N$. Then $|G:F| \in \cd(G)$ divides $r^{a}-1$, where $|F/N|=r^{a}$. Let $|G:F|\in\{3\cdot7,5\cdot11\}$ and $\chi(1)=2\cdot7\cdot11$. Then Lemma~\ref{lem:clif}(b) implies that $r^a$ divides $\chi^2(1)=2^2\cdot7^2\cdot11^2$, and it is impossible as $|G:F|$ does not divide $r^a-1$, for every divisor $r^a$ of $2^2\cdot7^2\cdot11^2$. Let now $|G:F|\not \in \{3\cdot7,5\cdot11\}$. Then no proper multiple of $|G:F|$ is in $\cd(G)$.
If $r=3$, then by Lemma ~\ref{lem:clif}(a), both $5\cdot7\cdot11$ and $2^4\cdot5\cdot7$ must divide $|G:F|$, which is a contradiction. If $r=5$, then by Lemma ~\ref{lem:clif}(a), both $3\cdot7\cdot11$ and $2\cdot7\cdot11$ must divide $|G:F|$, which is a contradiction. If $r=11$, then by Lemma~\ref{lem:clif}(a), $|G:F|$ is divisible by $2\cdot3\cdot5\cdot7$ and $2^4\cdot5\cdot7$, which is a contradiction. Similarly, if $r\neq3$, $5$, and $11$, then $3^2\cdot11$ and $5\cdot11$ divide $|G:F|$, which is impossible. Therefore, $G'=G''$.
\end{proof}

\begin{lemma}\label{lem:M22-2}
Let $G'/M$ be a chief factor of $G$. Then $G'/M \cong M_{22}$.
\end{lemma}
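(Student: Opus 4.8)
The plan is to mimic exactly the argument used for $M_{12}:2$ in Lemma~\ref{lem:M12-2}. Write $G'/M \cong S^k$ for a nonabelian simple group $S$ and a positive integer $k$, which is legitimate since $G'/M$ is a chief factor of $G$ and, by Lemma~\ref{lem:m22-1}, $G'=G''$ is perfect. Since every irreducible character degree of $S$ divides some degree of $G'/M \leq G/M$ and hence some degree of $G$ (which equals some degree of $M_{22}:2$), Lemma~\ref{lem:M22.2}(d) restricts $S$ to the list $A_5$, $A_6$, $A_7$, $L_2(7)$, $L_2(8)$, or $M_{22}$.

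Next I would rule out $k\geq 2$. Inspecting $\cd(M_{22}:2)$ in Lemma~\ref{lem:M22.2}(b), the largest power of any single prime dividing a degree is $2^4$ (in $560$), and no degree is divisible by $9$ times any other prime power beyond what appears, nor by $49$, $121$, $25$, etc. More to the point, if $k\geq 2$ then $\psi(1)^k$ divides a degree of $G$ for a nontrivial $\psi\in\Irr(S)$; taking $\psi$ of smallest nontrivial degree ($3$ for $L_2(7)$, $4$ for $A_5$, $5$ for $A_6$, $6$ for $A_7$, $7$ for $L_2(8)$, $21$ for $M_{22}$) already forces a square like $9$, $16$, $25$, $36$, $49$, $441$ into some degree, and for $k\geq 2$ the only degrees of $M_{22}:2$ divisible by such squares are too small to accommodate the extra factor $|G'/M : S^{k-1}\text{-part}|$ together with $\tau(1)$ for $\tau\in\Irr(G'/M \cdot S\text{-quotient})$. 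Concretely: $9\mid 99$ but then $\psi(1)^k=9$ forces $k=2$, $S=L_2(7)$, and an irreducible constituent of $(\psi^2)^{G/M}$ has degree $9t$ with $t\mid |\Out(L_2(7)^2\text{ in }G/M)|$, giving a degree $\leq 99$ that must still be a degree of $G$ — this can be pushed to a contradiction by then multiplying by a further nontrivial degree of $S$, exceeding the available room. I expect this to be dispatched the same way the authors handle $k=1$ in Lemma~\ref{lem:M12-2}: pick the constituent $\chi$ of $\psi^{G/M}$, note $\chi(1)=t\psi(1)$ with $t\mid|\Out(S)|$ (or a bit more when $k\geq 2$), and exhibit that the resulting small degree multiplied by a suitable character degree of the quotient $G/M$ fails to divide any degree of $M_{22}:2$.

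Finally, having forced $k=1$, I would eliminate every $S\neq M_{22}$. For $S\in\{A_5,A_6\}$ the group $G'/M$ has an irreducible character $\psi$ of degree $5$; any constituent $\chi$ of $\psi^{G/M}$ has $\chi(1)=5t$ with $t\mid 2$, so $G$ has a degree dividing $10$ — but $\cd(M_{22}:2)$ contains no degree in $\{5,10\}$, a contradiction. For $S=L_2(7)$ there is $\psi$ of degree $3$, giving a degree of $G$ dividing $6$, impossible. For $S=L_2(8)$ there is $\psi$ of degree $7$; since $\Out(L_2(8))\cong\Zbb_3$, a constituent has degree $7t$ with $t\mid 3$, so $G$ has a degree in $\{7,21\}$ — here $21\in\cd(M_{22}:2)$, so this case needs one more step: use instead a character of $L_2(8)$ of degree $8$ or $9$ (both lie in $\cd(L_2(8))$), which would force a degree of $G$ dividing $24$ or $27$, neither of which divides any degree of $M_{22}:2$, a contradiction. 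For $S=A_7$ a character of degree $6$ yields a degree of $G$ dividing $12$, which divides no degree of $M_{22}:2$ (all degrees are listed in Lemma~\ref{lem:M22.2}(b) and none is a multiple of $12$), a contradiction. Hence $S\cong M_{22}$ and $G'/M\cong M_{22}$.

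The main obstacle is the $L_2(8)$ case, precisely because $21=3\cdot 7\in\cd(M_{22}:2)$, so the naive ``smallest nontrivial degree'' trick fails and one must combine the outer automorphism group $\Zbb_3$ with a larger character degree of $L_2(8)$; everything else is a routine divisibility check against the twelve-element list in Lemma~\ref{lem:M22.2}(b).
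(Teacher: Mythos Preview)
Your overall strategy matches the paper's exactly: restrict $S$ via Lemma~\ref{lem:M22.2}(d), force $k=1$, then eliminate each $S\neq M_{22}$ by producing a degree $\psi(1)\cdot t$ with $t\mid |\Out(S)|$ that does not occur in $\cd(M_{22}:2)$. For $k=1$ your case analysis is essentially the paper's, modulo two minor points: $|\Out(A_6)|=4$, not $2$, so the bound for $A_6$ is $20$ rather than $10$ (still a contradiction, since no nontrivial degree of $M_{22}:2$ is below $21$); and for $L_2(8)$ the paper simply takes $\psi(1)=8$ from the outset (yielding a degree $8$ or $24$, neither in $\cd(M_{22}:2)$), avoiding your detour through $\psi(1)=7$.

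The genuine gap is your handling of $k\geq 2$. Your ``smallest nontrivial degree squared'' approach does not close the case: $A_5$ also has a character of degree $3$ (not only $4$), and in any event both $9\mid 99$ and $16\mid 560$, so neither square is excluded by inspection; you then explicitly defer (``this can be pushed to a contradiction'', ``I expect this to be dispatched'') without carrying it out. The paper's argument here is one line and is worth absorbing: every $S$ on the list has an irreducible character of degree divisible by $5$ or by $7$ (for instance $5\in\cd(A_5)$, $5\in\cd(A_6)$, $7\in\cd(L_2(7))$, $7\in\cd(L_2(8))$, $14\in\cd(A_7)$, $21\in\cd(M_{22})$), so if $k\geq 2$ then $G'/M\cong S^k$ has an irreducible character of degree divisible by $25$ or $49$. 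Since any irreducible character degree of the normal subgroup $G'/M$ divides some degree of $G/M$, and no element of $\cd(M_{22}:2)$ is divisible by $25$ or $49$, this forces $k=1$ immediately.
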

\begin{proof}
Suppose $G'/M \cong S^k$, where $S$ is a nonabelian simple group and for some positive integer $k$. Since $S$ is a finite nonabelian simple group whose irreducible character degrees divide some degrees of $M_{22}:2$, by Lemma~\ref{lem:M22.2}(d), the group $S$ is isomorphic to $A_5$, $A_6$, $A_7$, $L_2(7)$, $L_2(8)$ or $M_{22}$. Observe that $G$ has no degree divisible by $25$ and $49$. This implies that $k=1$ in each case.
Assume that $S$ is isomorphic to one of the simple groups as in the first column of Table~\ref{tbl:s2-M22}. Then by \cite{Atlas}, $G'/M$ has a character $\psi$ of degree as in the third column of the same Table~\ref{tbl:s2-M22}. If $\chi$ is an irreducible constituent of $\psi^{G/M}$, then $\chi(1)=t\psi(1)$, where $t$ divides $|\Out(S)|$. Consequently, $G$ has a character of degree at most $d$ as in the forth column of Table~\ref{tbl:s2-M22}, which is a contradiction.
\begin{table}
  \centering
  \caption{The triples $(S,\psi,d)$ in Lemma~\ref{lem:M22-2}}
  \label{tbl:s2-M22}
  \smallskip
  \begin{tabular}{lccc}
    \hline
    $S$ & $|\Out(S)|$ & $\psi(1)$ & $d$ \\
    \hline
    $A_5$ & $2$ & $5$ & $10$\\
    $A_6$ & $4$ & $5$ & $20$\\
    $A_7$, $L_2(7)$ & $2$ & $6$ & $12$\\
    $L_2(8)$ & $3$ & $8$ & $24$ \\
    \hline
  \end{tabular}
\end{table}
For example, if $S$ is isomorphic to $A_7$ or $L_2(7)$, then $G'/M$ has a character $\psi$ of degree $6$, then $\chi(1)=t\psi(1)=6t$, where $t$ divides $|\Out(S)|$, and so $G$ has a character of degree at most $12$, which is a contradiction. Therefore $G'/M \cong M_{22}$.
\end{proof}

\begin{lemma}\label{lem:M22-3}
If $\theta \in \Irr(M)$, then $I_{G'}(\theta)=G'$ and $M=M'$.
\end{lemma}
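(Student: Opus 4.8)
The plan is to follow the pattern of Lemma~\ref{lem:M12-3}, this time exploiting the three maximal subgroups of $M_{22}$ recorded in Lemma~\ref{lem:M22.2}(c); it suffices to treat $\theta$ with $\theta(1)=1$, which is the case needed to invoke Lemma~\ref{lem:schur}. I would argue by contradiction: assuming $I:=I_{G'}(\theta)<G'$, Lemma~\ref{lem:clif}(a) writes $\theta^I=\sum_{i=1}^{k}\phi_i$ with $\phi_i\in\Irr(I)$ and $\phi_i(1)\,|G':I|\in\cd(G')$. Since $M\unlhd G'\unlhd G$, every degree of $G'$ divides some degree of $G=\cd(M_{22}:2)$ and in particular is at most $\max\cd(G)=560$. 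Choosing a maximal subgroup $U/M$ of $G'/M\cong M_{22}$ with $I/M\leq U/M$ and setting $t:=|U:I|$, the number $t\,\phi_i(1)\,|G':U|$ then divides a degree of $G$; hence $|G':U|$ does, and Lemma~\ref{lem:M22.2}(c) forces $U/M\cong L_3(4)$, $2^4:S_5$ or $2^4:A_6$ with $|G':U|=22$, $231$ or $77$ respectively.

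The first claim is that $t=1$. Inspecting $\cd(M_{22}:2)$, the only degree divisible by $22$ is $154$, the only one divisible by $231$ is $231$ itself, and the ones divisible by $77$ are $154,231,385$; hence $t\,\phi_i(1)\mid 7$, $t\,\phi_i(1)=1$, or $t\,\phi_i(1)\mid 2,3$ or $5$ in the three cases. So $t=1$ provided $L_3(4)$ has no subgroup of index $7$ (true, since such a subgroup would embed $L_3(4)$ into $S_7$, which is too small) and $2^4:A_6$ has no subgroup of index $2$, $3$ or $5$ (its abelianization is trivial, as $A_6$ acts irreducibly and fixed-point-freely on $2^4$, so there is no subgroup of index $2$; and the core of a subgroup of index $3$ or $5$ would be a normal subgroup of index at most $120$, whereas the only proper nontrivial normal subgroup of $2^4:A_6$ has index $360$). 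Thus $I/M=U/M$ in every case.

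Then I would rule out each possibility for $I/M$. If $\theta$ extends to $\theta_0\in\Irr(I)$, Lemma~\ref{lem:clif}(b) gives $\tau(1)\,|G':U|\in\cd(G')$ for every $\tau\in\Irr(I/M)$; taking $\tau$ of degree $20$ (a degree of $L_3(4)$), of degree $4$ (inflated from $S_5$), or of degree $8$ (inflated from $A_6$) produces a value exceeding $560$, impossible. If $\theta$ does not extend, then every $\phi_i(1)\geq 2$, and by Lemma~\ref{lem:clif}(c) the $\phi_i(1)$ are precisely the degrees of the irreducible projective representations of $I/M$ for the cohomology class attached to $\theta$, so $\sum_i\phi_i(1)^2=|I/M|$ while the number of $\phi_i$ is at most the number of conjugacy classes of $I/M$. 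Combined with the restrictions above this is absurd: $\phi_i(1)=7$ forces $49k=|L_3(4)|=20160$, impossible as $49\nmid 20160$; there is no admissible $\phi_i$ at all in the $2^4:S_5$ case (a constituent of degree $1$ would be an extension of $\theta$); and $\phi_i(1)\in\{2,3,5\}$ with $\sum_i\phi_i(1)^2=|2^4:A_6|=5760$ is incompatible with the small number of conjugacy classes of $2^4:A_6$ (at most $k(A_6)=7$ plus the number of classes of the stabilizer in $A_6$ of a nonzero vector, a group of order $24$), so that $\sum_i\phi_i(1)^2\leq 31\cdot 25<5760$. This establishes $I_{G'}(\theta)=G'$ for every linear $\theta\in\Irr(M)$.

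Finally, since $M\unlhd G'=G''$ (Lemma~\ref{lem:m22-1}) and every linear character of $M$ is $G'$-invariant, Lemma~\ref{lem:schur} yields $M'=[M,G']$ and $|M/M'|\mid 12$ by Lemma~\ref{lem:M22.2}(a). If $M\neq M'$, I would pick a prime $p\in\{2,3\}$ dividing $|M/M'|$ and pass to a normal subgroup $L$ of $G'$ with $M'\leq L\leq M$ and $G'/L\cong p\cdot M_{22}$; then every faithful irreducible character degree of $p\cdot M_{22}$ lies in $\cd(G'/L)\subseteq\cd(G')$ and hence divides some degree of $G$. The finish is to check, from the ATLAS tables of $2\cdot M_{22}$ and $3\cdot M_{22}$, that each of them has a faithful irreducible character whose degree divides no element of $\cd(M_{22}:2)$, the desired contradiction; hence $M=M'$. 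The main obstacle is the middle step: unlike $M_{11}$ in the $M_{12}:2$ argument, the maximal subgroup $L_3(4)$ has a large Schur multiplier, so $\theta$ need not extend to $I$, and one is forced to control the projective characters of $I/M$ via Lemma~\ref{lem:clif}(c); handling that count, together with the non-existence of the relevant small-index subgroups of $L_3(4)$ and $2^4:A_6$, is the technical heart of the proof.
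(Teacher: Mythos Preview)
Your proof is correct and follows the same skeleton as the paper: reduce via Lemma~\ref{lem:M22.2}(c) to the three maximal overgroups, show $t=1$, dispose of each case, and then run the Schur-multiplier argument. The differences are local. In case~(i) you use $\tau(1)=20$ rather than the paper's $\tau(1)=64$, and for the non-extension branch you argue that $\sum_i\phi_i(1)^2=|L_3(4)|=20160$ forces $49\mid 20160$, which is cleaner than the paper's appeal to the projective degree tables of $L_3(4)$. In case~(iii) the paper takes a genuinely different route: instead of bounding the projective characters of $2^4{:}A_6$ directly, it passes to the intermediate normal subgroup $A$ with $A/M\cong 2^4$, observes that any $\lambda\in\Irr(A\,|\,\theta)$ is linear and $I$-invariant, reduces to projective degrees of $I/A\cong A_6$, and finishes by noting those degrees are all odd and invoking Moret\'o's theorem (Lemma~\ref{lem:sol}) to contradict the nonsolvability of $A_6$. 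Your class-counting bound (at most $k(A_6)+k(\text{stabiliser})\leq 7+24$ classes in $2^4{:}A_6$, so $\sum_i\phi_i(1)^2\leq 31\cdot 25<5760$) is more elementary and avoids Lemma~\ref{lem:sol} altogether; the paper's reduction is slicker but depends on that external result. For the final step your reduction to $G'/L\cong p\cdot M_{22}$ is justified because Lemma~\ref{lem:schur} gives $M'=[M,G']$, so $G'$ acts trivially on $M/M'$ and any index-$p$ subgroup is normal and central; the paper records the relevant faithful degrees ($440$ for $2\cdot M_{22}$, $384$ for $3\cdot M_{22}$) in Table~\ref{tbl:c-3-M22}, which completes the check you left to the ATLAS.
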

\begin{proof}
Suppose  $I:=I_{G'}(\theta)<G'$. By Lemma~\ref{lem:clif} we have $\theta^I= \sum_{i=1}^{k} \phi_i$ where $\phi_i \in \Irr(I)$ for $ i=1,2,...,k$. Let $U/M$ be a maximal subgroup of $G'/M\cong M_{22}$ containing $I/M$ and set $t:=|U:I|$. It follows from Lemma~\ref{lem:clif}(a) that $\phi_i(1)|G':I| \in \cd(G')$, and so $t\phi_i(1)|G':U|$ divides some degrees of $G$. Then $|G':U|$  must divide some character degrees of $G$, and hence by Lemma~\ref{lem:M22.2} one of the following holds: \smallskip

\noindent (i) Suppose $U/M\cong L_3(4)$. Then, for each $i$, $t\phi_i(1)$ divide $7$.
As $U/M \cong L_3(4)$ does not have any subgroup of index $7$, by  ~\cite[p. 23]{Atlas}, so $t=1$ and $I/M\cong U/M\cong L_3(4)$ and $\phi_i(1)$ divide $7$. If $\phi_i(1)=1$, then $\theta$ extend to $\phi_{i}$, and so by Lemma~\ref{lem:clif}(b), $(\phi_{i}\tau )^{G'}\in \Irr(G')$, for all $\tau \in \Irr(I/M)$. If $\tau(1)=64\in \cd(L_3(4))$, then $22\tau(1)=2^{7}\cdot11$ divides some degrees of $G$, which is a contradiction. Hence $\phi_i(1)=7$, for all $i$. Then $\phi_{i_{M}}=e_{i}\theta$, where $e_{i}\neq 1$ is the degree of  a projective representation of $I/M\cong L_3(4)$, and it is impossible by~\cite[p. 24]{Atlas}.\smallskip

\noindent (ii) Suppose  $U/M\cong 2^4:S_5$. Then $I/M\cong 2^4: S_5$ and $\phi_i(1)=1$, for all $i$. Thus $\theta$ extends to $\phi_{i}$ in $I$. It follows from Lemma~\ref{lem:clif}(b) that $\tau(1)|G':I|$ divides some character degrees of $G$, for all $\tau \in I/M$. This is impossible by taking $\tau(1)=4$.\smallskip

\noindent (iii) Suppose  $U/M \cong 2^4:A_6$. In this case, $t\phi_{i}(1)$ divides $2$, $3$ or $5$. It follows from \cite{Atlas-Almost} that $U/M$ has no maximal subgroup of index $2$, $3$ and $5$, and this implies that $I=U$. Therefore, $I/M \cong 2^{4}:A_{6}$, that is to say, $I/M$ has an abelian subgroup $A/M$ of order $2^{4}$ such that $I/A \cong A_{6}$.

Let now $\lambda\in \Irr(A|\theta)$, and write $\lambda^{I}=\sum f_{i}\mu_{i}$. Since $A\unlhd I$, the degree $\mu_{i}(1)$ divides $2$, $3$ or $5$, for all $i$. Since the index of a maximal subgroup of $I/A \cong A_{6}$ is at least $6$, $\lambda$ is $I$-invariant, and so $\mu_{i_A}=f_{i}\lambda$, for all $i$. If $f_{i}=1$ for some $i$, then $\lambda$ extends to $\lambda_{0}\in \Irr(I)$, and so by  Lemma~\ref{lem:clif}(b), $\lambda_{0}\tau$ is an irreducible character of $\lambda^{I}$, for all $\tau \in \Irr(I/A)$, and so $\lambda_{0}(1)\tau(1)=\tau(1)$ divides $2$, $3$, or $5$. This is impossible as we could take $\tau(1)=8 \in \cd(A_{6})$. Therefore, $f_{i}>1$, for all $i$. Moreover, we know from Lemma~\ref{lem:clif}(c) that each $f_{i}$ is the degree of a nontrivial proper irreducible projective representation of $A_{6}$, by \cite[p. 5]{Atlas}, we observe that $f_{i}\in\{3,5\}$. This shows that $\mu(1)/\lambda(1)$ is odd, for all $\mu\in \Irr(I|A)$, and so by Lemma~\ref{lem:sol}, the group $I/A\cong A_{6}$ is solvable, which is a contradiction.

This show $I_{G'}(\theta)=G'$. By Lemma~\ref{lem:schur}, we have that $|M/M'|$ divides the order of Schur multiplier of $G'/M\cong M_{22}$ which is $12$. If $|M/M'|\neq 1$, then $G$ has an irreducible character of degree divisible by one the degrees in the second row of Table~\ref{tbl:c-3-M22}, which is a contradiction. Therefore, $M=M'$.
\end{proof}
\begin{table}
  \centering
  \caption{Some character degrees of $G'/M'$ in Lemma~\ref{lem:M22-3}}
  \label{tbl:c-3-M22}       
  \smallskip
  \begin{tabular}{lccccc}
    \hline
    $G'/M'$ & $2\cdot M_{22}$ & $3\cdot M_{22}$ & $4\cdot M_{22}$ & $6\cdot M_{22}$ & $12\cdot M_{22}$\\
    \hline
    Degree & $440$ & $384$ & $440$ & $384$ & $384$\\
    \hline
  \end{tabular}
\end{table}

\begin{lemma}\label{lem:M22-4}
The subgroup $M$ is trivial, and hence $G'\cong M_{22}$.
\end{lemma}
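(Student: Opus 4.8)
The plan is to mimic the structure of Lemma~\ref{lem:M12-4}, using the facts already established in Lemmas~\ref{lem:M22-2} and~\ref{lem:M22-3}, namely that $G'/M\cong M_{22}$ and $M=M'$. First I would suppose, for contradiction, that $M\neq 1$. Since $M=M'$ is perfect, $M$ cannot be abelian (an abelian perfect group is trivial), so $M$ is a nonabelian perfect group. I would then pick a normal subgroup $N\leq M$ of $G'$ with $M/N$ a chief factor of $G'$; since $M$ is nonabelian (and any abelian chief factor would be centralized, forcing $M/N$ trivial in the relevant sense), the chief factor $M/N$ is nonabelian, so $M/N\cong S^{k}$ for some nonabelian simple group $S$ and some $k\geq 1$.

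Next I would apply Lemma~\ref{lem:exten}(b): $S$ has a nontrivial irreducible character $\varphi$ which extends to $\Aut(S)$, hence $\varphi^{k}\in\Irr(M/N)$ extends to $\Irr(G'/N)$ — here one uses that $G'/N$ acts on $M/N$ and the extension result applies to the group $G'/N$ with its normal subgroup $M/N$. By Gallagher's theorem, Lemma~\ref{lem:gal}(b), multiplying this extension by the inflations of characters of $(G'/N)/(M/N)\cong G'/M\cong M_{22}$ yields $\varphi(1)^{k}\tau(1)\in\cd(G'/N)\subseteq\cd(G')\subseteq\cd(G)$ for every $\tau\in\Irr(M_{22})$. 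Taking $\tau(1)$ to be the largest character degree of $M_{22}$ — which is $385$ (the degrees of $M_{22}$ being $1,21,45,55,99,154,210,231,280,385$) — gives a degree $\varphi(1)^{k}\cdot 385$ in $\cd(G)$. Since $\varphi$ is nontrivial, $\varphi(1)\geq 2$, so this degree strictly exceeds $385=\max\cd(M_{22}:2)$, a contradiction. Therefore $M=1$, and then $G'=G'/M\cong M_{22}$, as claimed.

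The only genuine subtlety, and the step I would be most careful about, is the passage from "$M$ is perfect" to "$M/N$ is a nonabelian chief factor": one must exclude the possibility that every chief factor of $G'$ inside $M$ is an elementary abelian $p$-group. This is handled exactly as in the proof of Lemma~\ref{lem:M12-4} — if $M/N$ were abelian one still derives a contradiction, but the cleanest route is to observe that a perfect group has no abelian composition/chief factors above a maximal normal subgroup of the derived type in play here; more precisely, were $M$ solvable, then $M=M'$ forces $M=1$, so if $M\neq 1$ then $M$ is nonsolvable and we may choose $N$ so that $M/N$ is a nonabelian chief factor. With that in hand the degree-comparison argument is routine, since the bound $\max\cd(M_{22})=385=\max\cd(M_{22}:2)$ leaves no room for a nontrivial factor $\varphi(1)^{k}\geq 2$.
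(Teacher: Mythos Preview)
Your approach is exactly the paper's: assume $M\neq 1$, take a chief factor $M/N\cong S^{k}$ of $G'$, extend $\varphi^{k}$ to $G'/N$ via Lemma~\ref{lem:exten}, apply Gallagher with the largest degree of $M_{22}$, and obtain a degree of $G'$ too large to divide any degree of $G$.

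Two small slips to fix. First, $\max\cd(M_{22}:2)=560$, not $385$ (see Lemma~\ref{lem:M22.2}(b)); and the inclusion $\cd(G')\subseteq\cd(G)$ is not valid in general --- what Clifford theory gives is that every degree of $G'$ divides some degree of $G$. Neither is fatal: $\varphi(1)^{k}\cdot 385\geq 770>560$, so this element of $\cd(G')$ divides no element of $\cd(G)$ and the contradiction survives. Second, your ``subtlety'' dissolves more cleanly than via nonsolvability: if $M/N$ were abelian then $M=M'\leq N$, impossible for a nontrivial chief factor, so $M/N$ is automatically nonabelian once $M=M'\neq 1$.
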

\begin{proof}
It follows from Lemmas~\ref{lem:M22-2} and \ref{lem:M22-3} that  $G'/M \cong M_{22}$ and $M=M'$. Assume that $M$ is nonabelian, and let $N\leq M$ be a normal subgroup of $G'$ such that $M/N$ is a chief factor of $G'$. Then $M/N\cong S^{k}$ for some nonabelian simple group $S$. By Lemma~\ref{lem:exten}, $S$ has a nontrivial irreducible character $\varphi$ such that $\varphi^{k}\in \Irr(M/N)$ extends to $G'/N$. Now Lemma~\ref{lem:gal}(b) implies that $\varphi(1)^{k}\tau(1)\in \cd(G'/N)\subseteq \cd(G')$, for all $\tau \in \Irr(G'/M)$. As $\varphi(1)>1$, if we choose $\tau\in G'/M$ such that $\tau(1)$ is the largest degree of $M_{22}$, then $\varphi(1)^{k}\tau(1)$ divides no degree of $G$, which is a contradiction. Therefore, $M$ is abelian, and hence we are done.
\end{proof}

\begin{lemma}\label{lem:M22-5}
There exists an abelian group $A$ such that $G/A\cong M_{22}:2$.
\end{lemma}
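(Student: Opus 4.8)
The plan is to argue exactly as in Lemma~\ref{lem:M12-5}, with the degree $560$ now playing the role that $32$ played there. First I would set $A:=C_G(G')$. By Lemma~\ref{lem:M22-4} we have $G'\cong M_{22}$, which is a nonabelian simple group and in particular is centreless; hence $A\cap G'=C_G(G')\cap G'=Z(G')=1$, so that $AG'=A\times G'$. Then the map $a\mapsto aG'$ embeds $A$ isomorphically onto the subgroup $AG'/G'$ of $G/G'$, and since $G/G'$ is abelian, so is $A$. This is where the word ``abelian'' in the statement comes from.

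Next I would pin down $G/A$. As $G'\unlhd G$, its centralizer $A=C_G(G')$ is normal in $G$, and the natural map identifies $G/A=G/C_G(G')$ with a subgroup of $\Aut(G')\cong\Aut(M_{22})=M_{22}:2$; moreover this subgroup contains $G'A/A\cong G'\cong M_{22}$ as a normal subgroup, namely the copy $\mathrm{Inn}(M_{22})$. Since $[\Aut(M_{22}):M_{22}]=2$ by Lemma~\ref{lem:M22.2}(a), there is nothing strictly between these two subgroups, so $G/A$ is isomorphic to $M_{22}$ or to $M_{22}:2$.

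It then remains to rule out $G/A\cong M_{22}$. In that case $|G:A|=|M_{22}|=|G'A:A|$, so $G=G'A=G'\times A$; as $A$ is abelian this forces $\cd(G)=\cd(G')=\cd(M_{22})$. But $\cd(G)=\cd(M_{22}:2)$ contains the degree $560$ by Lemma~\ref{lem:M22.2}(b), whereas $M_{22}$ has no irreducible character of degree $560$: inspecting the character table in \cite[p.~39]{Atlas}, the degree $560=2\cdot 280$ occurs only in $M_{22}:2$, where it arises from the fusion of the two degree-$280$ characters of $M_{22}$. This contradiction leaves $G/A\cong M_{22}:2$, which finishes Theorem~\ref{thm:main} for socle $M_{22}$. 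The only step needing any real attention — the ``main obstacle'', such as it is — is the last one: everything reduces to exhibiting a single degree lying in $\cd(M_{22}:2)\setminus\cd(M_{22})$, and $560$ does the job; the rest is formal.
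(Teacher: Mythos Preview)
Your argument is correct and follows essentially the same approach as the paper's own proof: set $A:=C_G(G')$, use $G'\cap A=1$ and $G/A\hookrightarrow\Aut(M_{22})$ to reduce to the two possibilities $M_{22}$ and $M_{22}:2$, and eliminate the former by the degree $560\in\cd(M_{22}:2)\setminus\cd(M_{22})$. Your write-up is in fact slightly more careful than the paper's, since you explicitly justify why $A$ is abelian (via its embedding into $G/G'$), a point the paper takes for granted.
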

\begin{proof}
Set $A:=C_G(G')$. Since $G'\cap A=1$ and $G'A\cong G' \times A$, it follows that $G' \cong G'A/A\unlhd G/A\leq \Aut(G')$. Since $G'\cong M_{22}$, we conclude that $G/A$ is isomorphic to $M_{22}$ or $M_{22}:2$. In the case where $G/A$ is isomorphic to $M_{22}$, we conclude that $G\cong A\times M_{22}$. This is impossible as $560\in\cd(G)$ but $M_{22}$ has no character of degree $560$. Therefore, $G/A$ is isomorphic to $M_{22}:2$.
\end{proof}


\end{document}